\documentclass[11pt,leqno]{amsart}
\usepackage{amsmath,amssymb,amsthm,amscd,wasysym,mathrsfs,amsxtra,mathtools}
\input xy
\xyoption{all}

\usepackage[frame,cmtip,curve,arrow,matrix,line,graph]{xy}
\usepackage{tikz-cd}

\usepackage[margin=2.3cm]{geometry}
\linespread{1.1} 

\usepackage{enumitem}
\setlist[enumerate,1]{font=\normalfont, label=(\roman*)}

\usepackage{hyperref}

\usepackage{tikz}
\usetikzlibrary{backgrounds}

\newtheorem{theorem}{Theorem}[section]

\newtheorem{lemma}[theorem]{Lemma}

\newtheorem{proposition}[theorem]{Proposition}

\newtheorem{corollary}[theorem]{Corollary}

\theoremstyle{definition}

\newtheorem{remark}[theorem]{Remark}

\newcommand{\IC}{\mathbb{C}}
\newcommand{\IR}{\mathbb{R}}
\newcommand{\IZ}{\mathbb{Z}}

\newcommand{\IP}{\mathbb{P}}
\newcommand{\X}{\mathcal{X}}

\newcommand{\OO}{\mathcal{O}}
\newcommand{\Tr}{\mathop{\mathrm{Tr}}\nolimits}

\newcommand{\Ext}{\operatorname{Ext}}

\newcommand{\red}{\mathrm{red}}

\DeclareMathOperator{\Hom}{Hom}

\DeclareMathOperator{\Spec}{Spec\,}

\title[The higher rank local categorical DT/PT correspondence]{The higher rank local categorical DT/PT correspondence}  
\author[Wu-yen Chuang]{Wu-yen Chuang}
\email{wychuang@gmail.com}

\keywords{derived categories, categorical Donaldson-Thomas theory}
\subjclass[2010]{Primary: 14N35, 14F05; Secondary: }

\makeatletter

\@addtoreset{equation}{section}	
\makeatother

\begin{document}

\begin{abstract} 
In this paper we derive the higher rank local DT/PT models via the perverse
coherent systems on the resolved conifold and the extended ADHM quiver, as
critical loci. We generalize the categorical DT/PT correspondence by Pădurariu and Toda to higher ranks and obtain the categorical wallcrossing formula as semiorthogonal decompositions. 
\end{abstract}

\maketitle

\section{Introduction}
Let $X=\mathrm{Tot}(\omega_{\IP^2})$ be the total space of canonical bundle on $\IP^2$. Let $\mathfrak{T}_X^\red(m,d)$ be the moduli of pairs 
$(s: \OO_X \to F)$, where $F$ is a one-dimensional sheaf on $X$ supported 
on a reduced plane curve of degree $m$ in $\IP^2$, and $\mathrm{cok}(s)$ is 
at most zero-dimensional. The open immersion $\IC^2:=\IP^2 \backslash \ell_\infty \subset \IP^2$ gives an open immersion $\IC^3 \subset X$. 

Let $\mathfrak{T}_{\IC^3}^\red(m,d)$ be the open substack in 
$\mathfrak{T}_X^\red(m,d)$, such that the cokernel $\mathrm{cok}(s)$ and the maximal zero-dimensional subsheaf $F_{\mathrm{tor}}$ of $F$ are supported 
on $\IC^3$, and the support of $F$ is a certain reduced plane curve. Then  
Pădurariu and Toda proved that $\mathfrak{T}_{\IC^3}^\red(m,d)$ is 
the global critical locus of a regular function $\Tr W_{m,d}$ on a 
quotient stack \cite[Theorem 1.3]{PTc}. 

Using the global critical locus description of $\mathfrak{T}_{\IC^3}^\red(m,d)$, they defined the dg-categories 
$\mathcal{DT}_{\IC^3}^\red(m,d)$ and $\mathcal{PT}_{\IC^3}^\red(m,d)$,
categorifying DT and PT invariants respectively. It was then proved that
\cite[Theorem 1.4]{PTc} there is a semiorthogonal decomposition 
\begin{align}
	\mathcal{DT}_{\IC^3}^\red(m,d) = \big\langle \big( 
	\boxtimes_{i=1}^{k} \mathbb{S}(d_i)_{w_i} \big) \boxtimes 
	\mathcal{PT}_{\IC^3}^\red(m,d') \big\rangle 
\end{align} where the RHS is over all $d' \leq d$, 
partitions $(d_i)_{i=1}^k$ of $d-d'$, and integers $(w_i)_{i=1}^k$
such that for $v_i := w_i + d_i(d'+ \sum_{j>i}d_j - \sum_{j<i}d_j)$,
we have
\begin{align*}
	-1 < \frac{v_1}{d_1} < \cdots < \frac{v_k}{d_k} \leq 0,
\end{align*} where $\mathbb{S}(d)_w$ is the quasi-BPS category defined 
as categories of matrix factorizations, see \cite{PTb}\cite{PTd} for the results on $\mathbb{S}(d)_{w}$. 

This semiorthogonal decomposition is regarded as the categorical wallcrossing formula, categorifying the numerical DT/PT wallcrossing. The purpose of the paper is to generalize this local categorical DT/PT correspondence to the higher rank case. 

The paper is organized as follows. In section 2 we derive the higher rank
local DT/PT models via the perverse coherent systems on the resolved 
conifold, and also via the extended ADHM quiver. In section 3 we
generalize the local categorical DT/PT correspondence by \cite{PTc} to higher ranks and obtain the categorical wallcrossing formula as semiorthogonal decompositions.

\medskip
\noindent{\bf Acknowledgment.} WYC would like to thank Emanuel Diaconescu for answering many of his questions. WYC was partially supported by Taiwan NSTC grant and NTU TIMS grant.

\section{The higher rank local DT/PT models as critical loci}\label{crit}

In this paper we work over the complex field $\IC$. 
In the first part of this section, using the conifold quiver description of higher rank perverse coherent systems, we derive the higher rank local DP/PT models
on $\IC^3$. In the second part of this section, we derive the higher 
rank local DT/PT models on $\IC^3$ via the extended ADHM quiver. 

 \subsection{Perverse coherent systems on the resolved conifold.}
 
Let $X$ be the resolved conifold over $\IC$, namely the total 
space of the rank 
$2$ locally free sheaf $\OO_{\IP^1}(-1) \oplus \OO_{\IP^1}(-1)$ over $\IP^1$. Let $C_0 \subset X$ be the zero section, and $\pi: X \to C_0 \simeq  \IP^1$ be the projection. 
The local Calabi-Yau 3-fold $X$ is the crepant resolution $f:X \to Y$, where
\begin{align*}
Y = \Spec \IC[x,y,z,w]/(xy-zw)\subset \IC^4 \ .
\end{align*} Here $f: X \to Y$ is a projective morphsim between quasi-projective varieties, such that the fibers of $f$ have dimensions 
less than 2 and $\IR f_* \OO_X =\OO_Y$.

A {\it perverse coherent sheaf} \cite{Bri02} \cite{VdB04} is an object $E \in D^b(\text{Coh}(X))$ such that 
\begin{itemize}
	\item $H^i(E)=0$ unless $i=0, -1$,
	\item $\IR^1 f_*(H^0(E))=0$ and $\IR^0 f_*(H^{-1}(E))=0$,
	\item $\Hom (H^0(E),C)=0$ for any coherent sheaf $C$ on $X$ satisfying $\IR f_*(C)=0$.
\end{itemize} 

Let $\text{Per}(X/Y) \subset D^b(\text{Coh}(X))$ be the full subcategory 
consisting of all perverse coherent sheaves. It is the heart of a $t$-structure of $D^b(\text{Coh}(X))$, so it is an abelian category. 

A {\it perverse coherent system} on $X$ is a tuple $(F,U,s)$, where $F$ is a perverse coherent sheaf, $U$ a vector space of $r$ dimensions, and $s:U\otimes \OO_X \to F$ a homomorphism. A perverse coherent system $(F,U,s)$ with $W=\IC$ will be abbreviated as $(F,s)$. We denote the abelian category of perverse coherent systems on $X$ by $\overline{\text{Per}}(X/Y)$.

Let $Q_{\text{con}}$ be the quiver in Figure \ref{conq}, and $A$ be the algebra defined by 
\begin{align*}
	A:= \IC Q_{\text{con}} / \langle a_1 b_i a_2 = a_2 b_i a_1, b_1 a_i b_2= b_2 a_i b_1 \rangle_{i=1,2}.
\end{align*} The relations arise from the superpotential $W= a_1 b_1 a_2 b_2 - a_1 b_2 a_2 b_2$. Similarly let $\widetilde{Q}_{\text{con}}$ be the quiver in Figure \ref{fconq}, and $\widetilde{A}$ be the algebra defined by 
\begin{align*}
	\widetilde{A}:= \IC \widetilde{Q}_{\text{con}} / \langle a_1 b_i a_2 = a_2 b_i a_1, b_1 a_i b_2= b_2 a_i b_1 \rangle_{i=1,2}.
\end{align*}

\begin{figure}
	\begin{tikzpicture}[>=stealth,->,shorten >=2pt,looseness=.5,auto]
		\matrix [matrix of math nodes,
		column sep={3cm,between origins},
		row sep={3cm,between origins},
		nodes={circle, draw, minimum size=3.5mm}]
		{ 
			|(A)| 1 & |(B)| 2 \\         
		};		
		\tikzstyle{every node}=[font=\small\itshape]
		\node [anchor=west,right] at (-0.2,0.93) {$a_1$};            
		\node [anchor=west,right] at (-0.2,0.23) {$a_2$};            
		\node [anchor=west,right] at (-0.2,-0.2) {$b_2$};
		\node [anchor=west,right] at (-0.2,-0.9) {$b_1$}; 
		
		\draw (A) to [bend left=25,looseness=1] (B) node {};
		\draw (A) to [bend left=40,looseness=1] (B) node {};
		\draw (B) to [bend left=25,looseness=1] (A) node {};
		\draw (B) to [bend left=40,looseness=1] (A) node {};
	\end{tikzpicture}
	\caption{The conifold quiver $Q_{\text{con}}$.}\label{conq}
	
	\begin{tikzpicture}[>=stealth,->,shorten >=2pt,looseness=.5,auto]
		\matrix [matrix of math nodes,
		column sep={3cm,between origins},
		row sep={3cm,between origins},
		nodes={circle, draw, minimum size=3.5mm}]
		{ 
			|(I)| \tiny{\infty} & |(A)| 1 & |(B)| 2 \\         
		};
		\draw (I) -- (A) node {}; 
		\tikzstyle{every node}=[font=\small\itshape]
		\node [anchor=west,right] at (1.2,0.93) {$a_1$};            
		\node [anchor=west,right] at (1.2,0.23) {$a_2$};            
		\node [anchor=west,right] at (1.2,-0.2) {$b_2$};
		\node [anchor=west,right] at (1.2,-0.9) {$b_1$}; 
		\node [anchor=west,right] at (-1.7,0.3) {$u$};
		
		\draw (A) to [bend left=25,looseness=1] (B) node {};
		\draw (A) to [bend left=40,looseness=1] (B) node {};
		\draw (B) to [bend left=25,looseness=1] (A) node {};
		\draw (B) to [bend left=40,looseness=1] (A) node {};
	\end{tikzpicture}
	\caption{The framed conifold quiver $\widetilde{Q}_{\text{con}}$.}\label{fconq}
\end{figure}

Let $A\text{-}\mathrm{Mod}$ (resp. $\widetilde{A}\text{-}\mathrm{Mod}$) be the category of right $A$-modules (resp. $\widetilde{A}$-modules). We have the following equivalence:

\begin{proposition}[Proposition 3.2, Proposition 3.3 \cite{NN10}]
\begin{align*}
\mathrm{Per}(X/Y) \simeq A\text{-}\mathrm{Mod},\ \  
\overline{\mathrm{Per}}(X/Y) \simeq \widetilde{A}\text{-}\mathrm{Mod}.
\end{align*}

For the perverse coherent sheaf $F \in \mathrm{Per}(X/Y)$, we have under the equivalence
\begin{align*}
	V_1 = H^0(X,F), \ V_2 = H^0(X, F \otimes \mathcal{L}^{-1}),\ \  
	\mathrm{where}\ \mathcal{L}=\pi^{*}\OO_{\IP^1}(1).
\end{align*} For the perverse coherent system $(F, U, s) \in \overline{\mathrm{Per}}(X/Y)$, we have under the equivalence
\begin{align*}
V_{\infty}=U, \ V_1 = H^0(X,F), \ V_2 = H^0(X, F \otimes \mathcal{L}^{-1})\ .
\end{align*}
\end{proposition}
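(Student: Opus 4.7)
The plan is to follow the Nagao--Nakajima strategy \cite{NN10}: realize $A$ as the endomorphism algebra of a tilting bundle on $X$, transport the derived category via Van den Bergh's tilting theorem, and then check that the perverse heart corresponds to the standard heart of modules; the framed equivalence follows by adjunction. To begin, I would take $T := \OO_X \oplus \mathcal{L}$ with $\mathcal{L} = \pi^* \OO_{\IP^1}(1)$ as the candidate tilting bundle. Using $\pi_* \OO_X = \bigoplus_{k \geq 0} \mathrm{Sym}^k(\OO_{\IP^1}(1)^{\oplus 2})$ and the projection formula, one verifies $\mathrm{Ext}^i_X(T, T) = 0$ for $i > 0$ and that $T$ generates $D^b(\mathrm{Coh}(X))$. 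Computing the off-diagonal Hom spaces via pushforward to $\IP^1$ produces two generators in each direction between the two summands, matching $a_1, a_2, b_1, b_2$; the multiplicative relations among these morphisms, coming from the coordinate ring relation $xy = zw$ of $Y$, coincide with the Jacobian relations $a_1 b_i a_2 = a_2 b_i a_1$ and $b_1 a_i b_2 = b_2 a_i b_1$. This identifies $A$ with $\mathrm{End}(T)$ in the convention appropriate for right modules, and \cite{VdB04} then produces a derived equivalence $D^b(\mathrm{Coh}(X)) \simeq D^b(A\text{-}\mathrm{Mod})$.

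Next I would match the hearts. The three axioms defining $\mathrm{Per}(X/Y)$ are tailored so that $\mathrm{RHom}(T, F)$ sits in cohomological degree zero for each perverse $F$: the amplitude condition $H^i(F) = 0$ for $i \notin \{0, -1\}$ together with the vanishings $\IR^1 f_* H^0(F) = 0$ and $\IR^0 f_* H^{-1}(F) = 0$ controls the spectral sequence computing $\mathrm{Ext}^*(T, F)$, while the vanishing $\mathrm{Hom}(H^0(F), C) = 0$ for all $C$ with $\IR f_* C = 0$ rules out the remaining non-module contributions. Under the resulting equivalence the two summands of $T$ give the stated vertex dimensions $V_1 = \mathrm{Hom}(\OO_X, F) = H^0(X, F)$ and $V_2 = \mathrm{Hom}(\mathcal{L}, F) = H^0(X, F \otimes \mathcal{L}^{-1})$.

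For the framed equivalence, a perverse coherent system $(F, U, s)$ consists of $F \in \mathrm{Per}(X/Y)$ together with a morphism $s: U \otimes \OO_X \to F$; by adjunction this is the same as a linear map $U \to \mathrm{Hom}(\OO_X, F) = V_1$, which is exactly the data of a framing vertex $\infty$ and framing arrow $u$ as in $\widetilde{Q}_{\text{con}}$. The superpotential relations are unchanged, so this yields an object of $\widetilde{A}\text{-}\mathrm{Mod}$ with $V_\infty = U$. The main obstacle is the heart-identification step: verifying that the perverse axioms cut out precisely the complexes sent to degree-zero modules requires a careful spectral sequence analysis and control of the perverse torsion pair, and is the main technical content of the cited propositions in \cite{NN10}.
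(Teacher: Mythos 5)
The paper offers no proof of this proposition: it is cited directly from Nagao--Nakajima \cite{NN10}, Propositions 3.2 and 3.3, which in turn rest on Van den Bergh's tilting equivalence \cite{VdB04}. Your sketch reconstructs exactly that argument --- the tilting bundle $T=\OO_X\oplus\mathcal{L}$, the $\Ext$-vanishing via $\pi_*\OO_X=\bigoplus_k\mathrm{Sym}^k(\OO_{\IP^1}(1)^{\oplus 2})$, the identification $\mathrm{End}(T)\simeq A$, matching of the perverse heart with the module heart, and the adjunction $\Hom(U\otimes\OO_X,F)\simeq\Hom(U,V_1)$ for the framed case --- so the approach agrees with the cited source; the remaining gaps you flag (explicit derivation of the quadratic relations in $\mathrm{End}(T)$ and the spectral-sequence check matching the three perverse axioms to $t$-exactness) are precisely the content filled in by \cite{NN10} and \cite{VdB04}.
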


The space of stability condition and its chamber structure for $\widetilde{A}$-modules were worked in \cite{NN10}. 
For a generic stability condition $\zeta$ in the ``DT" region for $X$, we consider the moduli space 
\begin{align}
	\mathcal{N}_{r,n} := \mathrm{Mod}^{\zeta}(\widetilde{Q}_{\text{con}})_{(r,n+1,n)}/ GL(n+1) \times GL(n)
\end{align} of $\zeta$-stable framed representations of $\widetilde{Q}_{\text{con}}$ with dimension vector $(r,n+1,n)$.

Consider the moduli space $\mathcal{M}(X,U,C_0,n)$ of the higher rank perverse coherent systems $s: U \otimes \OO_X \to F$, where $F$ is perverse coherent sheaf with $H^{-1}(F) \simeq \OO_{C_0}$ and $\chi(H^0(F))=n$. 
Then the result of Nagao-Nakajima implies that the 
$\mathcal{M}(X,U,C_0,n)$ is identified with the critical locus of the function
\begin{align}
	\Tr W : \mathcal{N}_{r,n} \to \IC,\ \ W= a_1 b_1 a_2 b_2 - a_1 b_2 a_2 b_1. 
\end{align}
\begin{align}
		\mathcal{M}(X,U,C_0,n) \simeq \mathrm{crit}(\Tr W) \subset \mathcal{N}_{r,n}.
\end{align}

Consider the open subset $\mathcal{N}^{\circ}_{r,n} \subset 	\mathcal{N}_{r,n}$ parametrizing the representations such that 
$b_2: \IC^n \to \IC^{n+1}$ is injective. By the proof of \cite[Theorem 2.2(A)]{BR21}, the injectivity of $b_2$ corresponds to the condition 
that the support of $H^{0}(F)$ is away from $\infty \in C_0$. We identify 
$L$ with $C_0 \backslash \infty$ and $\IC^3$ with $X \backslash \pi^{-1}(\infty)$ and we have the following 
Cartesian diagram
\begin{equation}
\begin{tikzcd}
	L\arrow[hook]{r}{}\arrow[hook]{d}{}&\IC^3\arrow[hook]{d}{}\\
	C_0\arrow[hook]{r}{}&X
\end{tikzcd}
\end{equation}
where the horizontal maps are closed embeddings and the vertical maps are open embeddings. 

Therefore the critical locus of the function 
\begin{align*}
	\Tr W_0 := 	\Tr W \vert_{\mathcal{N}^{\circ}_{r,n}} : \mathcal{N}^{\circ}_{r,n} \to \IC
\end{align*} is identified with the moduli space 
$\mathcal{M}(\IC^3,U,L,n)$
of perverse coherent system on $\IC^3$, namely, $s: U \otimes \OO_{\IC^3} \to E$, 
where $E$ is a perverse coherent sheaf on $\IC^3$ with $H^{-1}(E) \simeq \OO_L$ and $\chi( H^{0}(E) ) =n$. That is,
\begin{align}
	\mathcal{M}(\IC^3,U,L,n) \simeq \mathrm{crit}(\Tr W_0 ) \subset
	\mathcal{N}^{\circ}_{r,n} \subset \mathcal{N}_{r,n}.
\end{align}

\subsection{Higher rank local DP/PT local models from the conifold quiver.} 

As discussed above, the moduli space 
$\mathcal{M}(\IC^3,U,L,n)$ of perverse coherent system $s: U \otimes \OO_{\IC^3} \to E$ is described by the $\zeta$-stable representations of $\widetilde{Q}_{\text{con}}$ with dimension vector $(r,n+1,n)$, with the injectivity of $b_2$ and $d \Tr W_0 =0$ imposed. 

Fix a basis for the $r$ dimensional complex vector space $U$. We rewrite $u:U \to V_1$ by $u_i: \IC \to V_1, \ i=1,\cdots,r$ with respect to the basis.  

In order to derive a new quiver, more suitable for later use, 
we set $V_{\infty} := V_1/ \textrm{im}(b_2)$, $V_2=\textrm{im}(b_2)$ and fix an isomorphism
\begin{align*}
	V_1 \simeq V_{\infty} \oplus V_2.
\end{align*}
For $s=1,2$ and $i=1,\cdots,r$, we set 
\begin{align*}
	& a''_s := a_s \vert_{V_2} : V_2 \to V_2, \\
	& b''_1 := \Pi_{\textrm{im}(b_2)} \circ b_1 : V_2 \to V_2, \\
	& a'_{s,i} := a_s \vert_{\textrm{im}(u_i)+\textrm{im}(b_2)/\textrm{im}(b_2)}: V_{\infty} \to V_2, \\
	& b'_{1,i} := \Pi_{\textrm{im}(u_i)+\textrm{im}(b_2)/\textrm{im}(b_2)} \circ b_1 : V_2 \to V_{\infty} \ . 
\end{align*}
Note that $a'_{1,i}, a'_{2,i}, b'_{1,i}$ is nontrivial only when $\textrm{im}(u_i) \notin \textrm{im}(b_2)$, that is, when $\textrm{im}(u_i) + \textrm{im}(b_2) = V_1$.

The superpotential $W = a_1 b_1 a_2 b_2 - a_1 b_2 a_2 b_1$ is reduced to 
\begin{align}\label{Wrn}
	 W_{r,n} = \sum_i (b'_{1,i} a''_2 a'_{1,i} - b'_{1,i} a''_1 a'_{2,i} ) + a''_1 b''_1 a''_2 - a''_2 b''_1 a''_1 .
\end{align}

\begin{figure}[ht]
	\begin{tikzpicture}[>=stealth,->,shorten >=2pt,looseness=.5,auto]
		\matrix [matrix of math nodes,
		column sep={3cm,between origins},
		row sep={3cm,between origins},
		nodes={circle, draw, minimum size=7.5mm}]
		{ 
			|(A)| V_{\infty} & |(B)| V_2 \\         
		};
		\tikzstyle{every node}=[font=\small\itshape]
		\path[->] (B) edge [loop above] node {$a''_1$} ()
		edge [loop right] node {$a''_2$} ()
		edge [loop below] node {$b''_1$} ();
		\node [anchor=west,right] at (-0.3,0.8) {$a'_{1,i}$};              
		\node [anchor=west,right] at (-0.3,0) {$a'_{2,i}$};              
		\node [anchor=west,right] at (-0.3,-0.96) {$b'_{1,i}$};              
		\draw (A) to [bend left=25] (B) node []{};
		\draw (A) to [bend left=35] (B) node []{};
		\draw (B) to [bend left=45] (A) node []{};
	\end{tikzpicture}
	\caption{The quiver $Q_{r,n}$.}\label{Qrn}
\end{figure}
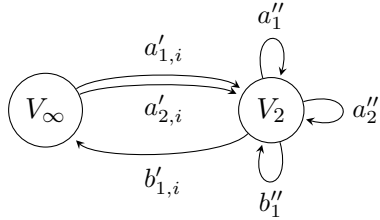

Consider the quiver $Q_{r,n}$ in Figure \ref{Qrn} with dimension vector $(1,n)$ and superpotential $W_{r,n}$ in (\ref{Wrn}). Note that there are $2r$ arrows from $V_\infty$ to $V_2$ and $r$ arrows from $V_2$ to $V_\infty$.
For the better presentation of the quiver moduli of $Q_{r,n}$, let $V$ be a vector space of $n$ dimensions, $A,B,C \in \mathfrak{gl}(V)$, $u_{1i}, u_{2i} \in V$, and $v_{i} \in V^\vee$.

We define the function 
\begin{align}
	\Tr W_{r,n}: \big( V^{\oplus 2r} \oplus (V^{\vee})^{\oplus r} \oplus \mathfrak{gl}(V)^{\oplus 3}\big) / GL(V) & \to \IC , \\
	( u_{1i}, u_{2i}, v_i, A,B,C) \mapsto \sum_i ( v_i\ A\ u_{1i} - v_i\ B\  u_{2i} ) + \Tr C[A,B] . 
\end{align} 

Therefore we have the following proposition.
\begin{proposition}
	Let $\mathcal{M}(\IC^3,U,L,n)$ be the moduli stack parametrize the perverse coherent system such that $s: U \otimes \OO_{\IC^3} \to E$, where $\dim_{\IC}U=r$, $H^{-1}(E) \simeq \OO_L$ and $\chi(E)=n$.
	There is an isomorphism of schemes
	\begin{align}
	\mathcal{M}(\IC^3,U,L,n) \simeq \mathrm{crit}(\Tr W_{r,n}).
	\end{align} 
\end{proposition}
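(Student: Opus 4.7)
The plan is to combine the previously established identification $\mathcal{M}(\IC^3,U,L,n) \simeq \mathrm{crit}(\Tr W_0) \subset \mathcal{N}^{\circ}_{r,n}$ with a change of presentation of the ambient stack that brings $\Tr W_0$ into the form $\Tr W_{r,n}$. On $\mathcal{N}^{\circ}_{r,n}$, the injectivity of $b_2$ permits gauge-fixing $b_2$ to the inclusion of a chosen splitting $V_1 \simeq V_{\infty} \oplus V_2$, with $V_{\infty} := V_1/\mathrm{im}(b_2) \simeq \IC$ and $V_2 = V$. The residual gauge group after this fix is the parabolic $P \subset GL(V_1)$ stabilizing $V_2$, with Levi quotient $GL(V_{\infty}) \times GL(V_2)$ and unipotent radical $\Hom(V_{\infty}, V_2) \simeq V$.

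After the gauge-fix, I would block-decompose the remaining quiver data as $u_i = (\bar u_i, u_i^{(2)}) \in V_{\infty} \oplus V_2$, $a_s = (\alpha_s, a''_s) : V_1 \to V_2$ for $s=1,2$, and $b_1 = (\beta_1, b''_1) : V_2 \to V_1$, and then introduce the new variables $A := a''_1$, $B := a''_2$, $C := b''_1 \in \mathfrak{gl}(V)$, $u_{s,i} := a_s(u_i) = \alpha_s(\bar u_i) + a''_s(u_i^{(2)}) \in V$, and $v_i \in V^{\vee}$ assembled from $\beta_1$ together with the framings $\bar u_i$. A direct check shows these are invariant under the unipotent radical of $P$ (which translates the splitting by $\Hom(V_{\infty}, V_2)$) and under the residual $GL(V_{\infty}) = \IC^{*}$ action, so they define a morphism of stacks $\mathcal{N}^{\circ}_{r,n} \to [(V^{\oplus 2r} \oplus (V^{\vee})^{\oplus r} \oplus \mathfrak{gl}(V)^{\oplus 3}) / GL(V)]$.

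Substituting $b_2 = \iota$ into $W = a_1 b_1 a_2 b_2 - a_1 b_2 a_2 b_1$ and using the block decomposition, a direct computation yields $\Tr W_0 = \Tr(\beta_1 a''_2 \alpha_1) - \Tr(\beta_1 a''_1 \alpha_2) + \Tr(b''_1 [a''_2, a''_1])$. Re-expressing the cross term via $u_{s,i}, v_i$ by distributing over the $r$ framings produces the $r$-indexed sum $\sum_i (v_i A u_{1,i} - v_i B u_{2,i})$, while the diagonal term becomes $\Tr C[A,B]$ (up to the sign convention), so the identification intertwines $\Tr W_0$ with $\Tr W_{r,n}$ of the Proposition, and taking critical loci of the two sides gives the desired $\mathcal{M}(\IC^3, U, L, n) \simeq \mathrm{crit}(\Tr W_{r,n})$.

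The main obstacle is that the two ambient stacks have different virtual dimensions for $r > 1$ (their difference is $(2n-1)(r-1)$), so the identification cannot hold at the level of ambient stacks. One must therefore verify that the $d\Tr W_{r,n} = 0$ equations along the "extra" directions (not captured by the invariants $u_{s,i}, v_i, A, B, C$) are quadratic and non-degenerate, so that by a Knörrer-type elimination the critical loci nevertheless agree as schemes. With this handled, the superpotential computation above completes the proof.
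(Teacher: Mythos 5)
Your high-level strategy matches the paper's: start from $\mathcal{M}(\IC^3,U,L,n)\simeq\mathrm{crit}(\Tr W_0)\subset\mathcal{N}^\circ_{r,n}$, gauge-fix $b_2$ using its injectivity, split $V_1\simeq V_\infty\oplus V_2$, block-decompose the remaining data, and rewrite the potential. The paper's own ``proof'' of this Proposition is essentially the single sentence following the derivation of $W_{r,n}$, and your gauge-fixing/block-decomposition is the same derivation made more explicit. Your block computation $\Tr W_0 = \Tr(\beta_1 a''_2\alpha_1)-\Tr(\beta_1 a''_1\alpha_2)+\Tr(b''_1[a''_2,a''_1])$ is correct.

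However, there are two genuine gaps, both of which are also present (silently) in the paper. First, the step ``re-expressing the cross term via $u_{s,i},v_i$ by distributing over the $r$ framings produces the $r$-indexed sum $\sum_i(v_iAu_{1,i}-v_iBu_{2,i})$'' is not a computation but an assertion, and it cannot be a literal substitution: $\Tr W_0$ has \emph{no} dependence on the framings $u_i$ (it is a function of $a_s,b_1,b_2$ only), whereas $\Tr W_{r,n}$ is manifestly a sum of $r$ terms involving the $u_i$. The paper's $a'_{s,i}$ and $b'_{1,i}$ are built from the single blocks $\alpha_s,\beta_1$ \emph{together with} the $V_2$-components of $u_i$; substituting these back into $\sum_i(b'_{1,i}a''_2a'_{1,i}-b'_{1,i}a''_1a'_{2,i})$ does not cancel back down to the two cross terms of $\Tr W_0$ unless one carefully analyzes which terms survive on the critical locus. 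You need to exhibit the actual comparison of critical loci, not just a formal rewriting of the function.

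Second, you correctly flag the dimension mismatch, and this is the most valuable observation in your write-up — it is a real issue that the paper does not address. But be careful about the direction. A dimension count gives $\dim\mathcal{N}^\circ_{r,n}=r(n+1)+4n(n+1)-(n+1)^2-n^2 = rn+r+2n^2+2n-1$, while the ambient of $\Tr W_{r,n}$ has dimension $3rn+2n^2$, so for $n\geq 1$ and $r>1$ it is the \emph{new} stack that is $(r-1)(2n-1)$ dimensions \emph{larger}, not $\mathcal{N}^\circ_{r,n}$. Your phrasing ``extra directions not captured by the invariants $u_{s,i},v_i,A,B,C$'' reads as if the extra directions lived in $\mathcal{N}^\circ_{r,n}$, which is backwards. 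Moreover, the proposed fix — a Knörrer-type elimination along those extra directions — is asserted but not verified; $\Tr W_{r,n}$ is cubic, so one would have to identify exactly which $(r-1)(2n-1)$ directions in the new stack are redundant, check that the potential is quadratic and non-degenerate along them in a neighborhood of the locus corresponding to $\mathcal{N}^\circ_{r,n}$, and confirm that this analysis is compatible with the quotient by $GL(V)$. As written, the argument stops exactly at the point where the real content of the Proposition begins.
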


Let $\chi_0 : GL(n) \to \IC^*$ be the determinant character $g \to \det g$. 
We define 
\begin{align*}
\Tr W^{\pm}_{r,n}: \big( V^{\oplus 2r} \oplus (V^{\vee})^{\oplus r} \oplus \mathfrak{gl}(V)^{\oplus 3}\big)^{\chi_0^{\pm}-ss} / GL(V) \to \IC
\end{align*} to be the restriction of $\Tr W_{r,n}$ to the GIT semistable loci. We also define
\begin{align*}
	I_{\IC^3}(r,L,n) = \mathcal{M}(\IC^3,U,L,n)^{\chi_0-\mathrm{ss}}, \  
	P_{\IC}^3(r,L,n) = \mathcal{M}(\IC^3,U,L,n)^{\chi_0^{-1}-\mathrm{ss}}
\end{align*}

Then we have the isomorphisms 
\begin{align*}
I_{\IC^3}(r,L,n) \simeq \mathrm{crit}(\Tr W^+_{r,n}), \ 
P_{\IC^3}(r,L,n) \simeq \mathrm{crit}(\Tr W^-_{r,n}).
\end{align*}

We define the following dg-categories
\begin{align}\label{higher_con}
\mathcal{DT}_{\IC^3}(r,L,n):= \mathrm{MF}\Big( \big( V^{\oplus 2r} \oplus (V^{\vee})^{\oplus r} \oplus \mathfrak{gl}(V)^{\oplus 3}\big)^{\chi_0-ss} / GL(V), \Tr W^+_{r,n} \Big), \\ \nonumber
\mathcal{PT}_{\IC^3}(r,L,n):= \mathrm{MF}\Big( \big( V^{\oplus 2r} \oplus (V^{\vee})^{\oplus r} \oplus \mathfrak{gl}(V)^{\oplus 3}\big)^{\chi_0^{-1}-ss} / GL(V), \Tr W^-_{r,n} \Big) \ .
\end{align}

\subsection{The higher rank extended ADHM quiver.}

This subsection is a higher rank generalization of \cite[Section 4]{PTc}.
We derive explicit description of higher rank DT/PT moduli or perverse coherent sheaves on $\IC^3$ with reduced curve support via extended 
ADHM quiver. The ADHM quiver is a quiver with relation depicted in Figure \ref{ADHMq}.

\begin{figure}[ht]
	\begin{tikzpicture}[>=stealth,->,shorten >=2pt,looseness=.5,auto]
		\matrix [matrix of math nodes,
		column sep={3cm,between origins},
		row sep={3cm,between origins},
		nodes={circle, draw, minimum size=7.5mm}]
		{ 
			|(A)| U  & |(B)| V \\         
		};
		\tikzstyle{every node}=[font=\small\itshape]
		\path[->] (B) edge [loop above] node {$A$} ()
		edge [loop below] node {$B$} ();
		\node [anchor=west,right] at (-0.3,0.8) {$u$};              
   		\node [anchor=west,right] at (-0.3,-0.96) {$v$};              
		\draw (A) to [bend left=35] (B) node []{};
		\draw (B) to [bend left=45] (A) node []{};
	\end{tikzpicture}
	\caption{ADHM quiver. The relation is given by $[A,B]+u \circ v =0$.}\label{ADHMq}
\end{figure}
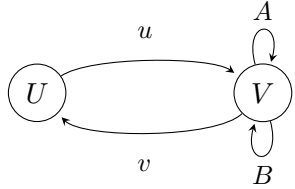

Let $[x,y,z]$ be the homogeneous coordinate of $\IP^2$ and $\IC^2$ be the 
open subset of $\IP^2$ such that $z \neq 0$. The line defined by $z=0$ is 
denoted by $\ell_{\infty}$. 

Let $\mathcal{T} \subset \mathrm{Coh}(\IP^2)$ be the subcategory of zero-dimensional sheaves and $\mathcal{F} \subset \mathrm{Coh}(\IP^2)$ be
the subcategory of coherent sheaves $F$ satisfying $\Hom(\mathcal{T},\mathcal{F})=0$. Define the tilted abelian category $\mathcal{A} = \langle \mathcal{F}, \mathcal{T}[-1] \rangle$.  

Next we consider the higher rank perverse coherent sheaves in $\mathcal{A}$.
Let $\mathfrak{U}_{\IC^2}(r,d)$ be the moduli stack of object $I \in \mathcal{A}$ with a trivialization $I \vert_{\ell_{\infty}} \simeq U \otimes \OO_{\ell_{\infty}}$, and $\mathrm{ch}_2(I)=-d$.

Similar to the rank one case, we have an exact sequence in $\mathcal{A}$
\begin{align*}
	0 \to H^{0}(I) \to I \to Q[-1] \to 0,
\end{align*} where $H^{0}(I)$ is a framed torsion free sheaf of rank $r$ and $Q=H^1(I)$ is zero-dimensional on $\IC^2$. 
Note that in this section the perverse coherent sheaves are defined to 
have cohomologies in degree $0,1$.

The moduli stack $\mathfrak{U}_{\IC^2}(r,d)$ is an Artin stack, admitting an explicit description via ADHM quiver as follows. Let $U$ and $V$ be complex vector spaces of dimension $r$ and $d$ respectively. 
Define the $GL(V)$-equivariant morphism:

\begin{align*}
		\mu: \Hom(U,V) \oplus \Hom(V,U) \oplus \mathfrak{gl}(V)^{\oplus 2}  & \to \mathfrak{gl}(V), \\
		(u,v,A,B) & \to [A,B] + u \circ v .	
\end{align*}

Then there is an equivalence of Artin stacks \cite[Theorem 5.7]{BFG06}:
\begin{align}
	\Upsilon : \mu^{-1}(0) / GL(V) \stackrel{\sim}{\to} \mathfrak{U}_{\IC^2}(r,d) . \ 
\end{align} Note that the equivalence can be generalized to families over dg-rings, thus promoted to an equivalence between derived stacks. 

\subsection{The moduli stack of higher rank pairs on $\IC^2$.} We consider the pair 
\begin{align}\label{pairc2}
	U \otimes \OO_{\IP^2} \stackrel{s}{\to} F, \ F \in \mathrm{Coh}(\IP^2),
\end{align} where $F$ is a one-dimensional sheaf and $\mathrm{cok}(s)$
is at most zero-dimensional.
We denote by $\mathfrak{U}_{\IP^2}(r,m,d)$ the Artin stack parametrizing the pairs
(\ref{pairc2}), with $[F]=m[l], \chi(F) = 3m/2 - m^2/2 +d$.

For each $m \in \mathbb{N}$, let $I_m$ be the set 
\begin{align*}
	I_m := \{\ (i,j)\in \IZ^{2}_{\geq 0} \ \vert\ 1 \leq i+j \leq m\ \}.
\end{align*} For each $\alpha=(\alpha_{ij}) \in \IC^{I_m}$, define the
polynomial 
\begin{align*}
	f_\alpha = 1+ \sum_{(i,j)\in I_m} \alpha_{ij} x^i y^j
\end{align*} and let $C_\alpha$ be the plane curve defined by $f_\alpha=0$.
We have an open immersion
\begin{align}
	\IC^{I_m} \hookrightarrow \vert \OO_{\IP^2}(m) \vert, \alpha 
	\mapsto \overline{f}_\alpha=z^m f_\alpha(x/z, y/z). 
\end{align}

In a complete analogy with \cite[Section 4.3]{PTc}, we also define an open
substack
\begin{align}
	 \mathfrak{U}_{\IC^2}(r,m,d) \hookrightarrow \mathfrak{U}_{\IP^2}(r,m,d),
\end{align} parametrizing the pairs, such that the support of $F$ consists of
the curve defined by $\overline{f}_\alpha=0 \subset \IP^2$ for $\alpha \in \IC^{I_m}$ and points away from $\ell_\infty$.

We have the following lemma, analogous to \cite[Lemma 4.1]{PTc}.
\begin{lemma}\label{h}
There is a natural morphism 
\begin{align*}
	h: \mathfrak{U}_{\IC^2}(r,m,d) \to \mathfrak{U}_{\IC^2}(r,d)
\end{align*} sending a pair $U \otimes \OO_{\IP^2} \stackrel{s}{\to} F$ in 
$\mathfrak{U}_{\IC^2}(r,m,d)$ to a complex 
$U \otimes \OO_{\IP^2}(m) \to F(m)$.
\end{lemma}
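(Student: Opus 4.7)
The plan is to define $h$ at the level of families of pairs and then verify the output lies in $\mathfrak{U}_{\IC^2}(r,d)$. On closed points, send a pair $(s\colon U \otimes \OO_{\IP^2} \to F) \in \mathfrak{U}_{\IC^2}(r,m,d)$ to the two-term complex
\begin{equation*}
I := \bigl[\,U \otimes \OO_{\IP^2}(m) \xrightarrow{s(m)} F(m)\,\bigr]
\end{equation*}
placed in cohomological degrees $0$ and $1$, where $s(m) := s \otimes \mathrm{id}_{\OO(m)}$.

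First I verify $I \in \mathcal{A}$. Tensoring with a line bundle is exact, so $H^0(I) = \ker s(m)$ and $H^1(I) = \mathrm{cok}\,s(m) \simeq \mathrm{cok}(s) \otimes \OO(m)$. By the defining condition of $\mathfrak{U}_{\IP^2}(r,m,d)$, $\mathrm{cok}\,s$ is zero-dimensional, hence $H^1(I) \in \mathcal{T}$. As a subsheaf of the locally free sheaf $U \otimes \OO_{\IP^2}(m)$, the kernel $\ker s(m)$ is torsion-free and lies in $\mathcal{F}$. Therefore $I$ lies in the tilted heart $\mathcal{A} = \langle \mathcal{F}, \mathcal{T}[-1]\rangle$.

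Second I produce the trivialization $I|_{\ell_\infty} \simeq U \otimes \OO_{\ell_\infty}$. By the open condition defining $\mathfrak{U}_{\IC^2}(r,m,d)$, the support of $F$ consists of the curve $\overline{f}_\alpha = 0$ together with points lying in $\IC^2$, and the curve meets $\ell_\infty$ transversely. Consequently the derived restriction $F(m)|^{\IL}_{\ell_\infty}$ has no Tor contribution and is an ordinary length-$m$ skyscraper on $\ell_\infty$ supported at $C_\alpha \cap \ell_\infty$. The restriction $\overline{f}_\alpha|_{\ell_\infty}$ gives a section of $\OO_{\ell_\infty}(m)$ cutting out this divisor and provides a canonical identification of the two-term complex $I|^{\IL}_{\ell_\infty}$ with $U \otimes \OO_{\ell_\infty}$, varying continuously in $\alpha$ and in $(F, s)$. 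The numerical condition $\mathrm{ch}_2(I) = -d$ then follows from the Euler characteristic identity $\chi(F) = 3m/2 - m^2/2 + d$ by a Grothendieck--Riemann--Roch computation applied to the class $[U \otimes \OO(m)] - [F(m)]$.

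Finally, the construction is functorial in families: forming $s(m)$ and the two-term complex commutes with flat base change, and the trivialization along $\ell_\infty \times T$ varies with the universal curve datum $\alpha$ over the base $T$. This yields the desired morphism of Artin stacks $h: \mathfrak{U}_{\IC^2}(r,m,d) \to \mathfrak{U}_{\IC^2}(r,d)$. The main difficulty, I expect, is the trivialization step: producing the identification canonically so that it is compatible with the automorphisms of the pair $(F,s)$ and with the universal framing datum parametrized by $\IC^{I_m}$. Once the trivialization is constructed via $\overline{f}_\alpha|_{\ell_\infty}$ in the correct functorial way, the remaining verifications are essentially routine.
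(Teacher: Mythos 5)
Your proposal follows the same outline as the paper's short proof: form the complex $I = (U \otimes \OO_{\IP^2}(m) \xrightarrow{s(m)} F(m))$, observe that near $\ell_\infty$ the sheaf $F$ agrees with $\OO_{C_\alpha}(m)$, and produce a framing there. You also usefully make explicit the check that $I$ lies in the tilted heart $\mathcal{A}$, which the paper leaves implicit.

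There are, however, two problems. First, your transversality hypothesis on $C_\alpha \cap \ell_\infty$ is neither guaranteed nor needed: for instance $\overline{f}_\alpha = z^2 + x^2$ lies in the image of $\IC^{I_2}$ and meets $\ell_\infty$ non-transversely at $[0:1:0]$. All the argument uses is $\ell_\infty \not\subset C_\alpha$, i.e.\ $z \nmid \overline{f}_\alpha$, which holds automatically since $\overline{f}_\alpha(0,0,1)=1\neq 0$. Second, and more seriously, the trivialization step fails when $r>1$. Near $\ell_\infty$ the map $s(m)$ is surjective onto $\OO_{C_\alpha}(m)$, so $I|^{\IL}_{\ell_\infty}=\ker(s(m))|_{\ell_\infty}$ is a rank-$r$ vector bundle on $\ell_\infty\cong\IP^1$ of degree $rm-m=(r-1)m$; a framing $I|_{\ell_\infty}\cong U\otimes\OO_{\ell_\infty}$ is therefore impossible unless $r=1$ or $m=0$. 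In particular the map $\mathrm{Id}_U\otimes\overline{f}_\alpha|_{\ell_\infty}$, whose image has degree $0$ inside $\ker(s(m))|_{\ell_\infty}$, cannot be the claimed isomorphism. The paper's own one-sentence trivialization (``$I|_{\mathcal{B}}$ is isomorphic to $U\otimes\OO_{\mathcal{B}}$'') suffers the same degree obstruction; it works verbatim only in the rank-one case of \cite[Lemma 4.1]{PTc}. For $r>1$ either the complex defining $h$ or the framing condition in the definition of $\mathfrak{U}_{\IC^2}(r,d)$ must be modified, so a genuinely new idea is required here that neither your proof nor the paper's supplies.
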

\begin{proof}
	Given a point $(U \otimes \OO_{\IP^2} \stackrel{s}{\to} F)$ in $\mathfrak{U}_{\IC^2}(r,m,d)$, consider the complex 
	\begin{align*}
		I=( U \otimes \OO_{\IP^2}(m) \to F(m) ).
	\end{align*}
Then there is an open neighborhood $\mathcal{B}$ of $\IP^2$ containing $\ell_\infty$,
such that $I\vert_\mathcal{B} = (U \otimes \OO_{\mathcal{B}}(m) \to \OO_C(m)\vert_\mathcal{B})$ for 
$C \in \IC^{I_m} \hookrightarrow \vert \OO_{\IP^2(m)} \vert$. 
Therefore $I\vert_{\mathcal{B}}$ is isomorphic to $U \otimes \OO_{\mathcal{B}}$, giving the required trivilization.
\end{proof}

\begin{lemma}\label{homIOm}
	For an object $I=(U \otimes \OO_{\IP^2} \stackrel{s}{\to} F) \in \mathfrak{U}_{\IC^2}(r,d)$, we have 
	\begin{align*}
		\Hom_{\IP^2}(I, U \otimes \OO_{\IP^2}(m)) = \mathrm{Ker} 
		\big( \Hom(H^0(I), U \otimes \OO_{\IP^2}(m)) \stackrel{\eta}{\to} \Ext^2(H^1(I),  U \otimes \OO_{\IP^2}(m))\big).
	\end{align*}
\end{lemma}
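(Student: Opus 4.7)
\medskip
\noindent\textbf{Proof plan.} Set $L := U \otimes \OO_{\IP^2}(m)$. Since $I$ lies in the tilted heart $\mathcal{A} = \langle \mathcal{F}, \mathcal{T}[-1] \rangle$ discussed before the lemma, there is a canonical short exact sequence in $\mathcal{A}$
\begin{align*}
0 \to H^0(I) \to I \to H^1(I)[-1] \to 0,
\end{align*}
with $H^0(I) \in \mathcal{F}$ a framed torsion-free sheaf of rank $r$ on $\IP^2$ and $H^1(I) \in \mathcal{T}$ zero-dimensional. This is simultaneously a distinguished triangle in $D^b(\mathrm{Coh}(\IP^2))$.

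I would apply $\Hom_{\IP^2}(-, L)$ to this triangle and read off $\Hom(I, L)$ from the associated long exact sequence. Using the shift identities $\Ext^i(H^1(I)[-1], L) = \Ext^{i+1}(H^1(I), L)$, the relevant segment reads
\begin{align*}
\Ext^1(H^1(I), L) \to \Hom(I, L) \to \Hom(H^0(I), L) \xrightarrow{\eta} \Ext^2(H^1(I), L),
\end{align*}
where $\eta$ is the connecting morphism. Thus the identification $\Hom(I, L) = \ker \eta$ claimed by the lemma is equivalent to the single vanishing $\Ext^1_{\IP^2}(H^1(I), L) = 0$.

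The main (and essentially only nontrivial) step is therefore this $\Ext^1$-vanishing. Since $H^1(I)$ is a zero-dimensional coherent sheaf on the smooth surface $\IP^2$, it admits a finite filtration whose subquotients are skyscrapers $\OO_p$, so by dévissage it suffices to show $\Ext^1_{\IP^2}(\OO_p, L) = 0$ for every closed point $p$. This is a local computation: applying $\Hom(-, L)$ to the Koszul resolution of $\OO_p$ of length $2 = \mathrm{codim}(p, \IP^2)$, only the $\Ext^2$ term survives because $L$ is locally free, so in particular $\Ext^1(\OO_p, L) = 0$. Feeding this vanishing back into the long exact sequence produces the required description of $\Hom_{\IP^2}(I, L)$. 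I do not anticipate any serious obstacle beyond recalling this codimension-two Koszul calculation.
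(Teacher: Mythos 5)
Your proof is correct and follows essentially the same route as the paper, whose entire argument is to apply $R\Hom(-, U\otimes\OO_{\IP^2}(m))$ to the triangle $H^0(I)\to I\to H^1(I)[-1]$ and read off the long exact sequence of cohomology. You additionally spell out the key vanishing $\Ext^1_{\IP^2}(H^1(I), L)=0$ (the codimension-two Koszul/d\'evissage computation for a zero-dimensional sheaf against a locally free sheaf on a smooth surface), which the paper leaves implicit but is exactly what makes the kernel description hold.
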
 
\begin{proof}
	The lemma is obtained by applying the functor $R\Hom(-,U \otimes \OO_{\IP^2}(m))$ and then taking the cohomologies.
\end{proof}

As mentioned above, the object $I\in \mathfrak{U}_{\IC^2}(r,d)$
fits into the short exact sequence in the tilted heart $\mathcal{A}$,
\begin{align*}
	0 \to H^0(I) \to I \to H^1(I)[-1] \to 0.
\end{align*} Let 
\begin{align*}
V \otimes \OO_{\IP^2}(-1) \stackrel{\phi}{\to}  (V ^{\oplus 2} \oplus U) \otimes \OO_{\IP^2} 
\stackrel{\psi}{\to} V \otimes \OO_{\IP^2}(1)
\end{align*} be the ADHM description for $I$, where $\phi$ and $\psi$ are given by 
\begin{align}
	\phi=\begin{pmatrix}
		zA-x\ \mathrm{Id} \\
		zB-y\ \mathrm{Id} \\
		zv
	\end{pmatrix}, \
	\psi=\begin{pmatrix}
		-zB+y\ \mathrm{Id}, zA-x\ \mathrm{Id}, zu
	\end{pmatrix}. 
\end{align}

Let $\Sigma \subset V$ be the intersection of all subspaces $S \subset V$
such that $A(S) \subset S, B(S) \subset S$ and $\mathrm{im}(u) \subset S$. 
By \cite[Theorem 4.10]{JM11}, we have the following commutative diagram
\begin{align}\label{sesI}
	\xymatrix{
		\Sigma \otimes \mathcal{O}_{\mathbb{P}^2}(-1) \ar[r] \ar[d] & 
		(\Sigma^{\oplus 2} \oplus U) \otimes \mathcal{O}_{\mathbb{P}^2} \ar[r] \ar[d]^{\epsilon}
		& \Sigma \otimes \mathcal{O}_{\mathbb{P}^2}(1) \ar[d] \\
		V \otimes \mathcal{O}_{\mathbb{P}^2}(-1) \ar[r]^-{\phi} \ar[d] & 
		(V^{\oplus 2} \oplus U) \otimes \mathcal{O}_{\mathbb{P}^2} \ar[r]^-{\psi} \ar[d] 
		& V \otimes \mathcal{O}_{\mathbb{P}^2}(1)	\ar[d] \\
		N \otimes \mathcal{O}_{\mathbb{P}^2}(-1) \ar[r] & 
		N^{\oplus 2} \otimes \mathcal{O}_{\mathbb{P}^2} \ar[r] 
		& N \otimes \mathcal{O}_{\mathbb{P}^2}(1),
	}
\end{align} where the top and bottom horizontal complexes represent $H^0(I)$
and $H^1(I)$ respectively. Moreover we have $v\vert_{\Sigma}=0$ \cite{Nak99}. 

Therefore for $t \in \Hom(H^0(I), U \otimes \OO_{\IP^2}(m))$, we represent $t$ by $\widetilde{t}$ in the following commutative diagram,
\begin{align}
		\xymatrix{
		\Sigma \otimes \mathcal{O}_{\mathbb{P}^2}(-1) \ar[r]^-{\phi|_{\Sigma}} \ar[d] & 
		(\Sigma^{\oplus 2} \oplus U) \otimes \mathcal{O}_{\mathbb{P}^2} \ar[r]^-{\psi|_{\Sigma}} \ar[d]^{\widetilde{t}} 
		& \Sigma \otimes \mathcal{O}_{\mathbb{P}^2}(1) \ar[d] \\
		0 \ar[r] & U \otimes \OO_{\IP^2}(m) \ar[r] & 0 .
	}
\end{align} 

We also need an analogous description for an element 
in $\Hom(I, U \otimes \OO_{\IP^2}(m))$. Recall that 
for $\alpha \in \IC^{I_m}$, we have the associated $f_\alpha \in \IC[x,y]$
and $\overline{f}_\alpha = z^m f_\alpha(x/z,y/z) \in \IC[x,y,z]$.
By the analysis in \cite[Section 4.4]{PTc}, there exist 
$g_\alpha, h_\alpha \in \IC[x,y] \otimes \mathfrak{gl}(V)$ of degree less than $m$, satisfying
\begin{align*}
	f_\alpha(A,B) - f_\alpha\ \mathrm{Id} = g_\alpha (A-x\ \mathrm{Id})
	+h_\alpha (B-y\ \mathrm{Id})
\end{align*} for $A,B \in \mathfrak{gl}(V)$. Then for 
\begin{align*}
	(u,v,A,B,\alpha) \in  \Hom(U,V) \oplus \Hom(V,U) \oplus \mathfrak{gl}(V)^{\oplus 2} 
	\oplus \IC^{I_m} 
\end{align*} such that $[A,B]+ u \circ v=0$ and $v \circ f_\alpha (A,B)=0$,
we have the commutative diagram

\begin{align}\label{IOm}
	\xymatrix{
		V \otimes \mathcal{O}_{\mathbb{P}^2}(-1) \ar[r]^-{\phi} \ar[d] & 
		(V^{\oplus 2} \oplus U) \otimes \mathcal{O}_{\mathbb{P}^2} \ar[r]^-{\psi} \ar[d]^{\gamma} 
		& V \otimes \mathcal{O}_{\mathbb{P}^2}(1) \ar[d] \\
		0 \ar[r] & U \otimes \OO_{\IP^2}(m) \ar[r] & 0 \ \ ,
	} 
\end{align}
where $\gamma = (v \circ \overline{g}_\alpha, v \circ \overline{h}_\alpha,
\mathrm{Id}_U \otimes \overline{f}_\alpha)$, 
$\overline{g}_\alpha=z^m g_\alpha(x/z,y/z)$, and $\overline{h}_\alpha=z^m h_\alpha(x/z,y/z)$. The diagram (\eqref{IOm}) as a whole represents a nonzero morphism
$(I \stackrel{t}{\to} U \otimes \OO_{\IP^2}(m))$. 

Let $\mathrm{Cone}(t)$ be the cone of $t$ and we have the distinguished
triangle
\begin{align*}
	I \stackrel{t}{\to} U \otimes \OO_{\IP^2}(m) \to \mathrm{Cone}(t) \to I[1].
\end{align*}
Taking the long exact sequence of cohomologies, we see that $\mathrm{Cone}(t)$ is a sheaf on $\IP^2$. We have the exact sequence
\begin{align}
	0 \to H^0(I) \to U \otimes \OO_{\IP^2}(m) \to \mathrm{Cone}(t) \to H^1(I) \to 0.
\end{align}

Moreover $I$ is isomorphic to 
\begin{align}
	I \simeq (U \otimes \OO_{\IP^2}(m) \to \mathrm{Cone}(t)) .
\end{align} Therefore $\mathrm{Cone}(t)$ is one-dimensional and supported on a curve defined by $t=0$, i.e., $\gamma=0$, i.e., $f_\alpha=0$. 

We have the following lemma.

\begin{lemma}\label{fiberh}
	Let $I \in \mathfrak{U}_{\IC^2}(r,d)$. Then the fiber $h^{-1}(I)$ at $I$
	of the morphism $h$ in Lemma \ref{h} consists of the elements in 
	$\Hom(I, U\otimes \OO_{\IP^2}(m))$ of the form $\gamma$ in \eqref{IOm}
	modulo $\mathrm{Aut}(I)$.
	Let $\eta$ be the morphism defined in Lemma \ref{homIOm}.
    Then the condition $\eta(\gamma \circ \epsilon)=0$ is equivalent to 
	\begin{align*}
		v \circ f_\alpha(A,B)=0 \ .
	\end{align*}
\end{lemma}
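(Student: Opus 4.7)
The plan is to prove the two assertions separately. For the fiber description, I would invert the construction in Lemma~\ref{h}: given a morphism $t \in \Hom(I, U \otimes \OO_{\IP^2}(m))$ represented by $\gamma$ of the form in \eqref{IOm}, the distinguished triangle $I \stackrel{t}{\to} U \otimes \OO_{\IP^2}(m) \to \mathrm{Cone}(t) \to I[1]$ together with the identification $I \simeq (U \otimes \OO_{\IP^2}(m) \to \mathrm{Cone}(t))$ established just before the lemma already shows that $\mathrm{Cone}(t)$ is a one-dimensional coherent sheaf supported on the plane curve $\overline{f}_\alpha = 0$. Twisting this by $\OO_{\IP^2}(-m)$ recovers a pair in $\mathfrak{U}_{\IC^2}(r, m, d)$ mapping to $I$ under $h$, giving an inverse to $h$ on the fiber, with the residual ambiguity in the identification $h(\text{pair}) \simeq I$ precisely an $\mathrm{Aut}(I)$-orbit.

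For the second assertion the core step is a direct computation of $\gamma \circ \phi$ on the ADHM resolution. Substituting the explicit formulas for $\gamma$ and $\phi$ yields
\begin{align*}
\gamma \circ \phi = v \circ \overline{g}_\alpha (zA - x\,\mathrm{Id}) + v \circ \overline{h}_\alpha (zB - y\,\mathrm{Id}) + z\,\overline{f}_\alpha \cdot v.
\end{align*}
The homogenized defining identity $f_\alpha(A,B) - f_\alpha\,\mathrm{Id} = g_\alpha(A - x\,\mathrm{Id}) + h_\alpha(B - y\,\mathrm{Id})$ lets me combine the first two terms as $z \cdot v \circ [f_\alpha(A,B) - f_\alpha\,\mathrm{Id}]$; the scalar $f_\alpha$-contribution then cancels with the third term, so (up to a nonzero power of $z$) one obtains $\gamma \circ \phi = v \circ f_\alpha(A, B)$ as a morphism $V \otimes \OO_{\IP^2}(-1) \to U \otimes \OO_{\IP^2}(m)$. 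Hence $\gamma \circ \phi = 0$ is equivalent to $v \circ f_\alpha(A, B) = 0$.

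Finally I would tie this back to $\eta(\gamma \circ \epsilon)$. By Lemma~\ref{homIOm}, $\Hom(I, U \otimes \OO_{\IP^2}(m))$ is identified with $\ker(\eta)$ via restriction to $H^0(I)$; on the ADHM resolution this means an element of the kernel is represented by a chain map $\tilde{\gamma}$ satisfying $\tilde{\gamma} \circ \phi = 0$, and the choice is unique modulo homotopies of the form $h \circ \psi$ (which do not affect $\tilde{\gamma} \circ \phi$ since $\psi \circ \phi = 0$). In one direction, if $v \circ f_\alpha(A,B) = 0$, then $\gamma$ itself is a chain map and its restriction $\gamma \circ \epsilon$ therefore lies in $\ker(\eta)$. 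For the converse I would note that $v|_\Sigma = 0$ and the $A,B$-stability of $\Sigma$ ensure that $v \circ f_\alpha(A,B)$ automatically vanishes on $\Sigma$, so $\gamma \circ \phi$ factors through $N = V/\Sigma$; unraveling the connecting homomorphism $\eta$ on the bottom row of \eqref{sesI} identifies $\eta(\gamma \circ \epsilon) \in \Ext^2(H^1(I), U \otimes \OO_{\IP^2}(m))$ with the class of this descended morphism, and the polynomial identity for $f_\alpha(A,B)$ shows this class vanishes precisely when $v \circ f_\alpha(A,B) = 0$. The main obstacle is this final identification: one must show that any alternative chain-map lift of $\gamma \circ \epsilon$ differs from $\gamma$ only on the $(V/\Sigma)^{\oplus 2}$-components, and then argue, using the explicit $z^{m+1}$-rigidity of the residual term $v \circ f_\alpha(A,B)$, that it cannot be absorbed into the image of the induced differential from $\Hom(N^{\oplus 2} \otimes \OO, U \otimes \OO(m))$ unless it is already zero.
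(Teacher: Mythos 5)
Your treatment of the first assertion (the description of $h^{-1}(I)$) tracks the paper's proof closely: the paper also passes from a morphism $\gamma$ of the form \eqref{IOm} to the pair $(U\otimes\OO_{\IP^2}(m)\to\mathrm{Cone}(t))\otimes\OO_{\IP^2}(-m)$ using the distinguished triangle established just before the lemma, and then notes the converse by appealing to that same discussion (any pair whose $h$-image is $I$ supplies a nonzero morphism $I\to U\otimes\OO_{\IP^2}(m)$, necessarily of the form $\gamma$). You phrase this as ``giving an inverse to $h$'' without spelling the reverse implication out, but the content is essentially the same.

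For the second assertion the paper simply cites the proof of \cite[Proposition 4.4]{PTc}, whereas you attempt a self-contained chain-level argument. Your key computation, which after carefully tracking the homogenization reads
\begin{equation*}
\gamma\circ\phi \;=\; z^{m+1}\, v\circ f_\alpha(A,B)\colon V\otimes\OO_{\IP^2}(-1)\to U\otimes\OO_{\IP^2}(m),
\end{equation*}
is correct and is indeed the heart of the PTc argument. The forward direction $v\circ f_\alpha(A,B)=0\Rightarrow\eta(\gamma\circ\epsilon)=0$ follows cleanly, since $\gamma$ then is an honest chain map representing a morphism $I\to U\otimes\OO_{\IP^2}(m)$ extending $\gamma\circ\epsilon$, and Lemma \ref{homIOm} places $\gamma\circ\epsilon$ in $\ker(\eta)$.

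The converse direction, however, is not closed in your proposal, and you flag it yourself as ``the main obstacle.'' Concretely: $v\circ f_\alpha(A,B)$ vanishes on $\Sigma$ (since $v\vert_\Sigma=0$ and $\Sigma$ is $A,B$-invariant), so $\gamma\circ\phi$ descends to a chain map $z^{m+1}\,\overline{v\circ f_\alpha(A,B)}\colon N\otimes\OO_{\IP^2}(-1)\to U\otimes\OO_{\IP^2}(m)$ on the bottom row of \eqref{sesI}, representing $\eta(\gamma\circ\epsilon)$. What remains is to show this class is zero only when the descended map is zero, i.e.\ that there is no pair $(h_1,h_2)\colon N^{\oplus 2}\otimes\OO_{\IP^2}\to U\otimes\OO_{\IP^2}(m)$ with $z^{m+1}\,\overline{v\circ f_\alpha(A,B)}=h_1(z\bar{A}-x\,\mathrm{Id})+h_2(z\bar{B}-y\,\mathrm{Id})$ unless $v\circ f_\alpha(A,B)=0$, where $\bar{A},\bar{B}$ are the induced endomorphisms of $N=V/\Sigma$. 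This rigidity statement is what the argument needs; it does hold (for instance, by identifying $\Ext^2(H^1(I),U\otimes\OO_{\IP^2}(m))$ with $\Hom(N,U)$ via Serre duality for the zero-dimensional sheaf $H^1(I)$ and checking that the class of $z^{m+1}c$ corresponds to $c$), but your proposal only gestures at it rather than proving it. Since the paper outsources exactly this step to PTc, your reconstruction captures the correct mechanism but leaves its crux open.
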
 

\begin{proof}
	
	We denote by $S^{\circ}$ the subset in $\Hom(I, U\otimes \OO_{\IP^2}(m))$
	whose element satisfying the condition in the statement. 
	
	Let $I \in \mathfrak{U}_{\IC^2}(r,m)$. Let $t \in \Hom(I, U \otimes \OO_{\IP^2}(m))$ be represented by $\gamma$ of the form in \eqref{IOm}.
	By the discussion above, we have 
	\begin{align}
		I \simeq (U \otimes \OO_{\IP^2}(m) \to \mathrm{Cone}(t))\ ,	\end{align} 
	and $\mathrm{Cone}(t)$ is one-dimensional,supported on a curve defined by $f_\alpha=0$. Applying $\otimes \OO_{\IP^2}(-m)$, we obtain a point 
	in $h^{-1}(I)$. It then follows that 
	\begin{align*}
		S^\circ \subset h^{-1}(I) \ .
	\end{align*}
	
On the other hand, let $(U \otimes \OO_{\IP^2} \to F)$ is a point in
$\mathfrak{U}_{\IC^2}(r,m,d)$ such that $(U \otimes \OO_{\IP^2}(m) \to F(m))$
is isomorphic to $I$ in $\mathfrak{U}_{\IC^2}(r,d)$. Then there is a nonzero 
morphism $I \to U \otimes \OO_{\IP^2}(m)$ represented by $\gamma$ of the form in \eqref{IOm}.

By the proof of \cite[Proposition 4.4]{PTc}, $\eta(t\circ \epsilon)=0$ is equivalent to 
\begin{align*}
	v \circ f_\alpha(A,B)=0 \ .
\end{align*}
\end{proof}

\begin{lemma}
	Consider the map
	\begin{align*}
		\widetilde{\mu} : \Hom(U,V) \oplus \Hom(V,U) \oplus \mathfrak{gl}(V)^{\oplus 2} 
		\oplus \IC^{I_m} & \to \mathfrak{gl}(V) \oplus \Hom(V,U), \\
		(u,v,A,B,\alpha) & \to \big( [A,B] + u \circ v, v \circ f_\alpha(A,B) \big) .	
	\end{align*}
	
	The moduli stack $\mathfrak{U}_{\IC^2}(r,m,d)$ has the following 
	description via the extended ADHM quiver:
	\begin{align*}
	\widetilde{\upsilon} : \widetilde{\mu}^{-1}(0) / GL(V) \stackrel{\sim}{\to} \mathfrak{U}_{\IC^2}(r,m,d). 		
	\end{align*} 
\end{lemma}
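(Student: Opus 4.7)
The plan is to build $\widetilde{\upsilon}$ by layering the fiber analysis of Lemma \ref{fiberh} on top of the ADHM equivalence $\Upsilon$. Starting from $(u,v,A,B,\alpha) \in \widetilde{\mu}^{-1}(0)$, the sub-tuple $(u,v,A,B)$ satisfies $[A,B]+u\circ v = 0$, so via $\Upsilon$ it produces an object $I \in \mathfrak{U}_{\IC^2}(r,d)$ represented by the monad in \eqref{sesI}. The additional piece $\alpha \in \IC^{I_m}$, together with the relation $v\circ f_\alpha(A,B) = 0$, assembles into the morphism $\gamma = (v\,\overline{g}_\alpha,\,v\,\overline{h}_\alpha,\,\mathrm{Id}_U\otimes\overline{f}_\alpha)$ of \eqref{IOm}; by the second half of Lemma \ref{fiberh}, this descends to a genuine morphism $t : I \to U\otimes \OO_{\IP^2}(m)$. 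Setting $F := \mathrm{Cone}(t)(-m)$ yields a one-dimensional sheaf supported on the curve $\{\overline{f}_\alpha = 0\} \subset \IP^2$ away from $\ell_\infty$, and the induced pair $(U\otimes \OO_{\IP^2} \to F)$ lies in $\mathfrak{U}_{\IC^2}(r,m,d)$ because the cokernel is a twist of $H^1(I)$, which is at most zero-dimensional. The whole construction is $GL(V)$-equivariant and descends to a morphism $\widetilde{\upsilon}$ of quotient stacks.

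To see $\widetilde{\upsilon}$ is an equivalence, I would compare fibers in the commutative square
\begin{equation*}
\begin{tikzcd}
\widetilde{\mu}^{-1}(0)/GL(V) \arrow[r, "\widetilde{\upsilon}"] \arrow[d, "\mathrm{pr}"'] & \mathfrak{U}_{\IC^2}(r,m,d) \arrow[d, "h"] \\
\mu^{-1}(0)/GL(V) \arrow[r, "\Upsilon"'] & \mathfrak{U}_{\IC^2}(r,d)
\end{tikzcd}
\end{equation*}
in which the left vertical arrow forgets $\alpha$. Over each $I = \Upsilon(u,v,A,B)$, Lemma \ref{fiberh} identifies the fiber of $h$ with the set of $\gamma$-type morphisms in $\Hom(I, U\otimes \OO_{\IP^2}(m))$ modulo $\mathrm{Aut}(I)$, while the fiber of $\mathrm{pr}$ is the linear subspace $\{\alpha \in \IC^{I_m} : v\circ f_\alpha(A,B) = 0\}$ modulo $\mathrm{Aut}(I)$. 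The correspondence $\alpha \mapsto \gamma$ is a bijection, since $\alpha$ is read off from the $U\otimes \OO_{\IP^2}(m)$-component $\mathrm{Id}_U\otimes \overline{f}_\alpha$ of $\gamma$. To upgrade this fiberwise bijection to an equivalence of Artin stacks, I would invoke the family version of $\Upsilon$ noted after its statement and observe that both $\mathrm{pr}$ and $h$ are relatively representable by closed subschemes of affine bundles over the common base, cut out by matching linear equations; naturality under base change then yields the stack equivalence.

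The main obstacle I anticipate is that the polynomials $g_\alpha, h_\alpha$ in the decomposition $f_\alpha(A,B) - f_\alpha\,\mathrm{Id} = g_\alpha(A - x\,\mathrm{Id}) + h_\alpha(B - y\,\mathrm{Id})$ are not uniquely determined by $\alpha$. Different choices alter $\gamma$ only by a homotopy with respect to the differentials $\phi, \psi$ of the ADHM monad, hence produce the same class in $\Hom(I, U\otimes \OO_{\IP^2}(m))$, so $\widetilde{\upsilon}$ is well-defined on the nose. This is exactly the rank-one argument of \cite[Section 4.4]{PTc}, and it carries over verbatim because the extra arrows of the extended ADHM data enter only through the morphism $v : V \to U$, whose rank plays no role in the coboundary computation.
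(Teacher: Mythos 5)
Your proposal is correct and takes essentially the same route as the paper: the paper's proof is a single line citing Lemma \ref{fiberh}, and your argument simply unpacks that citation into the explicit construction of $\widetilde{\upsilon}$ via $I=\Upsilon(u,v,A,B)$ and $F=\mathrm{Cone}(t)(-m)$, together with the fiberwise comparison of $\mathrm{pr}$ and $h$ over $\Upsilon$ that Lemma \ref{fiberh} makes possible. The additional observation about the non-uniqueness of $g_\alpha,h_\alpha$ only altering $\gamma$ up to a homotopy is a useful check the paper leaves implicit, but it does not change the line of argument.
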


\begin{proof}
	The lemma is a result of Lemma \ref{fiberh}.
\end{proof}

\subsection{The higher rank categorical DT/PT correspondence for $\IC^3$.}
We define the open subset
\begin{align}
	\vert \OO_{\IP^2}(m) \vert^{\mathrm{red}} \subset \vert \OO_{\IP^2}(m) \vert, \ (\IC^{I_m})^{\mathrm{red}} := \vert \OO_{\IP^2}(m) \vert^{\mathrm{red}} \cap \IC^{I_m},
\end{align} where $\vert \OO_{\IP^2}(m) \vert^{\mathrm{red}}$ is the open subset for the reduced curves in $\IP^2$.

Let $X=\mathrm{Tot}_{\IP^2}(\omega_{\IP^2})$. Let 
\begin{align*}
	\mathfrak{T}_X^{\mathrm{red}}(r,m,d)
\end{align*} be the moduli stack of $(U \otimes \OO_X \stackrel{s}{\to} F)$,
where $F$ is a one-dimensional sheaf on $X$ supported on a reduced curve of degree $m$ in $\IP^2$ and $s$ is surjective in dimension one. 
The open immersion $\IC^2 \subset \IP^2$ determines the open immersion 
$\IC^3 \subset X$. Define the open substack
\begin{align*}
	\mathfrak{T}_{\IC^3}^{\mathrm{red}}(r,m,d) \subset \mathfrak{T}_X^{\mathrm{red}}(r,m,d)
\end{align*} parametrizing $(U \otimes \OO_X \stackrel{s}{\to} F)$
such that $\mathrm{coker}(s)$ and $F_{\mathrm{tor}}$ are supported on 
$\IC^3$, and the support of $F$ is from an element in $(\IC^{I_m})^{\mathrm{red}}$.

We define the function $\Tr W_{r,m,d}$ to be 
\begin{align}
	\Tr W_{r,m,d}: \big(\Hom(U,V)^{\oplus 2} \oplus \Hom(V,U) \oplus \mathfrak{gl}(V)^{\oplus 3} \times (\IC^{I_m})^{\mathrm{red}} \big) / GL(V) \to \IC
\end{align} defined by 
\begin{align}
	(u_1, u_2, v , A, B ,C, \alpha) \mapsto 
	\Tr v \circ f_\alpha(A,B) \circ u_2 + \Tr C([A,B]+ u_1 \circ v).  
\end{align}

We set 
\begin{align*}
	\mathfrak{U}_{\IP^2}^{\red} (r,m,d) & := \mathfrak{U}_{\IP^2}(r,m,d)
	\times_{\vert \OO_{\IP^2}(m) \vert} \vert \OO_{\IP^2}(m) \vert^\red \ ,	\\
	\mathfrak{U}_{\IC^2}^{\red} (r,m,d) & := \mathfrak{U}_{\IC^2}(r,m,d)
	\times_{\IC^{I_m}} (\IC^{I_m})^\red \ .
\end{align*}

\begin{proposition}
	Let $\Omega_{\mathfrak{U}_{\IC^2}^{\red} (r,m,d)}[-1]$ be the $(-1)$-shifted cotangent stack. Then we have
	\begin{align*}
		\Omega_{\mathfrak{U}_{\IC^2}^{\red} (r,m,d)}[-1]^{\mathrm{cl}} 
		\simeq \mathrm{crit}(\Tr W_{r,m,d}).
	\end{align*} Moreover, its classical truncation is open substack 
	\begin{align*} 
		\Omega_{\mathfrak{U}_{\IC^2}^{\red} (r,m,d)}[-1]^{\mathrm{cl}}
		\subset \mathfrak{T}_{\IC^3}^{\mathrm{red}}(r,m,d) 
	\end{align*} 
\end{proposition}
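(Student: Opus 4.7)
The plan is to rely on the extended ADHM presentation of $\mathfrak{U}_{\IC^2}^{\red}(r,m,d)$ given by the previous lemma and then invoke the standard Koszul duality between $(-1)$-shifted cotangents of derived zero loci and critical loci of Lagrange-multiplier functions, following the pattern of the rank-one argument in \cite[Sec.~4]{PTc}, but with the framing vector space $U$ of arbitrary rank $r$ throughout.

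First, I would view $\widetilde{\upsilon}$ as presenting $\mathfrak{U}_{\IC^2}^{\red}(r,m,d)$ as the derived quotient $[\widetilde{\mu}^{-1}(0)/GL(V)]$ fibred over $(\IC^{I_m})^{\red}$, where $\widetilde{\mu}$ is a $GL(V)$-equivariant section of the trivial bundle with fibre $\mathfrak{gl}(V)\oplus \Hom(V,U)$. This makes the stack quasi-smooth, so that its $(-1)$-shifted cotangent is well-behaved. The standard fact I would then apply is: if $\mathcal{Y}=[s^{-1}(0)/G]$ is the derived zero locus of a $G$-equivariant section $s$ of a vector bundle $\mathcal{E}$ on a smooth $G$-scheme $\mathcal{X}$, then
\begin{align*}
\Omega_{\mathcal{Y}}[-1]^{\mathrm{cl}} \simeq \mathrm{crit}(\widetilde{f}), \qquad \widetilde{f}\colon [\mathcal{E}^\vee/G]\to \IC, \quad (x,\xi)\mapsto \langle \xi,\, s(x)\rangle.
\end{align*}
In our situation the trace pairing identifies $\mathcal{E}^\vee \simeq \mathfrak{gl}(V)\oplus \Hom(U,V)$, producing precisely the extra coordinates $(C,u_2)$ in the statement, and
\begin{align*}
\bigl\langle (C,u_2),\, ([A,B]+u_1 v,\, v\circ f_\alpha(A,B))\bigr\rangle = \Tr C([A,B]+u_1 v) + \Tr v\circ f_\alpha(A,B)\circ u_2
\end{align*}
is exactly $\Tr W_{r,m,d}$, yielding the first isomorphism.

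For the second claim I would interpret this critical locus moduli-theoretically via the affine projection $\pi\colon X=\mathrm{Tot}_{\IP^2}(\omega_{\IP^2})\to \IP^2$, which identifies $\mathrm{Coh}(X)$ with modules over $\pi_*\OO_X = \mathrm{Sym}^\bullet \omega_{\IP^2}^\vee$ on $\IP^2$; accordingly, a pair on $X$ is the same as a pair on $\IP^2$ together with a compatible $\omega_{\IP^2}$-valued Higgs field. The extra ADHM-like data $C$ and $u_2$ play the role of this Higgs field (after twisting by $\OO_{\IP^2}(3)$) and of a compatible framing along the $\omega_{\IP^2}$-direction, while the critical equations in $A,B,u_1,v$ and $\alpha$ are precisely the compatibility conditions ensuring that the resulting $\omega_{\IP^2}$-Higgs pair on $\IP^2$ comes from a genuine pair on $X$ with $\mathrm{coker}(s)$ and $F_{\mathrm{tor}}$ supported on $\IC^3$ and with curve support coming from $(\IC^{I_m})^{\red}$. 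This produces an open immersion into $\mathfrak{T}_{\IC^3}^{\red}(r,m,d)$, since those conditions are themselves open in $\mathfrak{T}_X^{\red}(r,m,d)$.

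The main obstacle will be the moduli-theoretic bookkeeping in this last step: one must rerun the rank-one argument of \cite[Sec.~4]{PTc} with framings of arbitrary rank $r$, keeping careful track of how the two $\Hom(U,V)$-valued framings $u_1,u_2$ together with $C$ encode the $\omega_{\IP^2}$-Higgs structure, and verifying that the openness defining $\mathfrak{T}_{\IC^3}^{\red}\subset \mathfrak{T}_X^{\red}$ (supports off $\ell_\infty$ and reducedness of the curve) cuts out exactly the locus inside $\widetilde{\mu}^{-1}(0)\times_{\IC^{I_m}}(\IC^{I_m})^{\red}$. Once this dictionary is in place, checking that the resulting comparison is an open immersion of classical stacks should be a routine verification.
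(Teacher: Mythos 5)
Your proof is correct and takes essentially the same approach as the paper: the first isomorphism is the standard $(-1)$-shifted cotangent/Lagrange-multiplier description of a quasi-smooth derived zero locus $[\widetilde{\mu}^{-1}(0)/GL(V)]$, which is exactly the fact the paper cites from \cite[Section 2.2.1]{Toda}, and your identification $\mathcal{E}^{\vee}\simeq \mathfrak{gl}(V)\oplus\Hom(U,V)$ via the trace pairing reproduces $\Tr W_{r,m,d}$ on the nose. Your Higgs-pair dictionary for the second claim (openness in $\mathfrak{T}_{\IC^3}^{\mathrm{red}}(r,m,d)$) is the intended rank-$r$ analogue of the rank-one argument in \cite[Section 4]{PTc}, which the paper's proof does not spell out either.
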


\begin{proof}
	The isomorphism $\Omega_{\mathfrak{U}_{\IC^2}^{\red} (r,m,d)}[-1]^{\mathrm{cl}} 
		\simeq \mathrm{crit}(\Tr W_{r,m,d})$ follows from the discussion
		in \cite[Section 2.2.1]{Toda} on the $(-1)$-shifted cotangent stack.
\end{proof}

Let $\chi_0 : GL(V) \to \IC^*$ be the determinant character. We define the GIT-semistable loci
in $\Omega_{\mathfrak{U}_{\IC^2}^{\red} (r,m,d)}[-1]^{\mathrm{cl}}$,
\begin{align*}
	I^{\red}(r,m,d)&:=\Omega_{\mathfrak{U}_{\IC^2}^{\red} (r,m,d)}[-1]^{\mathrm{cl}, \chi_0-\mathrm{ss}}, \\
	P^{\red}(r,m,d)&:=\Omega_{\mathfrak{U}_{\IC^2}^{\red} (r,m,d)}[-1]^{\mathrm{cl}, \chi_0^{-1}-\mathrm{ss}}, \\
	I^{\red}(r,m,d) & \subset \Omega_{\mathfrak{U}_{\IC^2}^{\red} (r,m,d)}[-1]^{\mathrm{cl}} \supset P^{\red}(r,m,d) .	
\end{align*}

We define
\begin{align}
	\Tr W_{r,m,d}^{\pm}: \Big( \big( \Hom(U,V)^{\oplus 2} \oplus \Hom(V,U) \oplus \mathfrak{gl}(V)^{\oplus 3} \big)^{\chi_0^{\pm}-\mathrm{ss}} \times (\IC^{I_m})^{\mathrm{red}} \Big) / GL(V) \to \IC
\end{align} to be the restriction of $\Tr W_{r,m,d}$ to the GIT semistable loci. Then we have 
\begin{align*}
	I^{\red}(r,m,d) \simeq \mathrm{crit}(\Tr W_{r,m,d}^+), \ 
	P^{\red}(r,m,d) \simeq \mathrm{crit}(\Tr W_{r,m,d}^-) \ .
\end{align*}

We define the following dg-categories
\begin{align}
	\label{higher_ADHM}
	\mathcal{DT}_{\IC^3}^{\red}(r,m,d):= \mathrm{MF}\Big(  \big( \Hom(U,V)^{\oplus 2} \oplus \Hom(V,U) \oplus \mathfrak{gl}(V)^{\oplus 3} \big)^{\chi_0-\mathrm{ss}} \times (\IC^{I_m})^{\mathrm{red}} / GL(V) , \Tr W^+_{r,m,d} \Big), \\ \nonumber
	\mathcal{PT}_{\IC^3}^{red}(r,m,d):= \mathrm{MF}\Big( \big( \Hom(U,V)^{\oplus 2} \oplus \Hom(V,U) \oplus \mathfrak{gl}(V)^{\oplus 3} \big)^{\chi_0^{-1}-\mathrm{ss}} \times (\IC^{I_m})^{\mathrm{red}} / GL(V), \Tr 
	W^-_{r,m,d} \Big) \ .
\end{align}

\section{The higher rank wallcrossing for DT/PT quivers}\label{DTPTquiver}
In this section we collect some results in \cite{PTa} \cite{PTc} and generalize them to the higher rank case. 

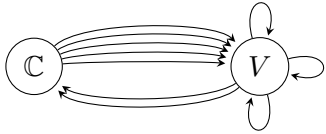
\begin{figure}[ht]
	\begin{tikzpicture}[>=stealth,->,shorten >=2pt,looseness=.5,auto]
		\matrix [matrix of math nodes,
		column sep={3cm,between origins},
		row sep={3cm,between origins},
		nodes={circle, draw, minimum size=7.5mm}]
		{ 
			|(A)| \IC & |(B)| V \\         
		};
		\tikzstyle{every node}=[font=\small\itshape]
		\path[->] (B) edge [loop above] node {} ()
		edge [loop right] node {} ()
		edge [loop below] node {} ();
		\node [anchor=west,right] at (-0.3,0.8) {};              
		\node [anchor=west,right] at (-0.3,0) {};              
		\node [anchor=west,right] at (-0.3,-0.96) {};              
		\draw (A) to [bend left=5] (B) node []{};
		\draw (A) to [bend left=15] (B) node []{};
		\draw (A) to [bend left=25] (B) node []{};
		\draw (A) to [bend left=35] (B) node []{};
		\draw (A) to [bend left=45] (B) node []{};
		\draw (B) to [bend left=35] (A) node []{};
		\draw (B) to [bend left=45] (A) node []{};
	\end{tikzpicture}
	\caption{DT/PT quiver $Q^{a,r}$ for $a=2, r=3$.}\label{Qar}
\end{figure}

\subsection{Quivers, weight spaces, and categorical Hall products.}
Here we define three quivers $Q$, $Q^a$, and $Q^{a,r}$. 
Let $Q=(Q_0, Q_1)$ be a quiver, where $Q_0=\{1\}$ is the vertex set and $Q_1=\{X, Y, Z\}$ is the edge set. Let $Q^a=(Q_0^a, Q_1^a)$ be the quiver with vertex set $Q_0^a=\{0, 1\}$ and edge set $Q_1^a$ containing three loops $Q_1=\{A, B, C\}$ at $1$, $a$ edges from $0$ to $1$, and $a$ edges from $1$ to $0$. 
Let $Q^{a,r}=(Q_0^a, Q_1^{a,r})$ be the quiver with edges 
$Q_1^{a,r}=\{e_1, \cdots, e_r\} \sqcup Q_1^a$, where $e_i, i=1, \cdots, r$ are edges from $0$ to $1$, see Figure \ref{Qar} for a picture of $Q^{2,r}$. 

Let $V$ be a $d$ dimensional complex vector space. The Lie algebra of $GL(V)$ is denoted by $\mathfrak{gl}(V)$ or $\mathfrak{gl}(d)$ or simply $\mathfrak{g}$. Consider the following $GL(V)\cong GL(d)$ representations for the quiver $Q$, $Q^a$, and $Q^{a,r}$ respectively,
\begin{align*}
R(d)&:=\mathfrak{gl}(V)^{\oplus 3},\\
    R^a(1,d)&:=V^{\oplus a}\oplus \left(V^{\vee}\right)^{\oplus a}\oplus \mathfrak{gl}(V)^{\oplus 3},\\
    R^{a,r}(1,d)&:=V^{\oplus \left(a+r\right)}\oplus \left(V^{\vee}\right)^{\oplus a}\oplus \mathfrak{gl}(V)^{\oplus 3}.
\end{align*}
Define the following quotient stacks,
\begin{align*}
\X(d)&:=R(d)/GL(d),\\
\X^a(1, d)&:=R^a(1, d)/GL(d),\\
\mathcal{X}^{a,r}(1, d)&:=R^{a,r}(1, d)/GL(d).
\end{align*}
 
Fix a maximal torus $T(d)$ of $GL(d)$. 
Let $M(d)$ be the weight space of $T(d)$ and $M(d)_{\mathbb{R}}:=M(d)\otimes_{\mathbb{Z}}\mathbb{R}$. Let $\beta_1,\ldots, \beta_d$ be the simple roots of $GL(d)$. 
We define $\rho:=\frac{1}{2}\mathfrak{g}^{\lambda<0}=\frac{1}{2}\sum_{j<i}(\beta_i-\beta_j),$ where $\lambda$ is the antidominant cocharacter $\lambda(t)=(t^d, t^{d-1}, \ldots, t)$. We define $\sigma_d:=\sum_{j=1}^d\beta_j$, and $\tau_d:=\frac{1}{d}\sigma_d$. 

Let $G$ be a reductive group, and $X$ be a representation of $G$, and $\X=X/G$ be the quotient stack. Let $\mathcal{V}$ be the multiset of $T(d)$-weights of $X$. For a cocharacter $\lambda$ of $T(d)$,
let $X^\lambda\subset X$ be the subspace generated by weights $\beta\in \mathcal{V}$ with $\langle \lambda, \beta\rangle=0$, and $X^{\lambda\geq 0}\subset X$ be the subspace generated by weights $\beta\in \mathcal{V}$ with $\langle \lambda, \beta\rangle\geq 0$. Let $G^\lambda$ and $G^{\lambda\geq 0}$ denote respectively the Levi and parabolic groups associated to $\lambda$. Define the fixed and attracting stacks as
\begin{align}
    \X^\lambda:=X^\lambda/ G^\lambda,\
    \X^{\lambda\geq 0}:=X^{\lambda\geq 0}/G^{\lambda\geq 0}
\end{align}
with the following morphisms
\begin{align}\label{map:attracting}\X^\lambda\xleftarrow{q_\lambda}\X^{\lambda\geq 0}\xrightarrow{p_\lambda}\X.
\end{align}

Let $d\in \mathbb{N}$ and $(d_i)_{i=1}^k$ be a partition of $d$. 
We similarly define partitions of $(d,w)\in\mathbb{N}\times\mathbb{Z}$.
There is an antidominant cocharacter $\lambda$
 of $T(d)$ such that 
 $\mathcal{X}(d)^{\lambda}\cong \times_{i=1}^k \mathcal{X}(d_i)$. 

The categorical Hall product is defined as 
$p_{\lambda*}q_\lambda^*$,
\begin{align}\label{prel:hall}
    * : D^b(\mathcal{X}(d_1)) \boxtimes
     \cdots \boxtimes D^b(\mathcal{X}(d_k))
     \to D^b(\mathcal{X}(d)). 
\end{align}

We also need the Hall products for the quivers $Q^a$ and $Q^{a,r}$. 
Let $(d_i)_{i=1}^k$ be a partition of $e$ and $d'=d-e$. 
Let $\lambda$ be the antidominant cocharacter of $T(e)$ 
corresponding to the partition $(d_i)_{i=1}^k$ of $e$, and 
let $\lambda'=(\lambda, 1_{d'})$. 
Then we have
\begin{align}\label{attract:a}
&\times_{i=1}^k \X(d_i) \times \X^a(1, d') \stackrel{q_{\lambda}}{\leftarrow}
\X^a(1, d)^{\lambda \geq 0} \stackrel{p_{\lambda}}{\to} \X^a(1, d), \\ \notag
&\times_{i=1}^k \X(d_i) \times \X^{a,r}(1, d') \stackrel{q_{\lambda'}}{\leftarrow}
\X^{a,r}(1, d)^{\lambda \geq 0} \stackrel{p_{\lambda'}}{\to} \X^{a,r}(1, d) \ .
\end{align} 
The functors $p_{\lambda \ast}q_{\lambda}^{\ast}$ gives 
categorical Hall products
\begin{align}\label{prel:halla}
    &\ast 
     \colon D^b(\mathcal{X}(d_1)) \boxtimes
     \cdots \boxtimes D^b(\mathcal{X}(d_k))\boxtimes D^b(\mathcal{X}^a(1, d'))
     \to D^b(\mathcal{X}^a(1, d)),\\
     &\ast 
     \colon D^b(\mathcal{X}(d_1)) \boxtimes
     \cdots \boxtimes D^b(\mathcal{X}(d_k))\boxtimes D^b(\mathcal{X}^{a,r}(1, d'))
     \to D^b(\mathcal{X}^{a,r}(1, d)).\nonumber
\end{align}

\subsection{Polytopes, categories of generators.}

In this subsection we construct some polytopes in $M(d)_{\mathbb{R}}$ which are generalized from \cite{PTa} \cite{PTc} and will be used for defining categories later.
The polytope $\mathbf{W}(d)$ is given by
\begin{equation}\label{W}
    \mathbf{W}(d):=\frac{3}{2}\text{sum}[0, \beta_i-\beta_j]+\mathbb{R}\tau_d\subset M(d)_{\mathbb{R}},
    \end{equation}
where $\text{sum}$ is Minkowski sum over $1\leq i, j\leq d$. $\mathbf{W}(d)_w$ is a 
hyperplane in $\mathbf{W}(d)$, defined by
\begin{equation}\label{W0}
    \mathbf{W}(d)_w:=\frac{3}{2}\text{sum}[0, \beta_i-\beta_j]+w\tau_d\subset \mathbf{W}(d).
    \end{equation}
    
The polytope $\mathbf{V}(d)$ is defined as
\begin{equation}\label{V}
    \mathbf{V}(d):=\frac{3}{2}\text{sum}[0, \beta_i-\beta_j]+r \ \text{sum}[-\beta_k, 0],
\end{equation}
where the Minkowski sum is after all $1\leq i, j, k\leq d$. Note that the definition of polytope $\mathbf{V}(d)$ differs from the one used in \cite{PTc} by a multiple $r$ in front of $\text{sum}[-\beta_k, 0]$. 

Define the polytopes in $M(d)_\mathbb{R}$:
\begin{align*}
    \mathbf{W}^a(1, d)&:=\frac{3}{2}\text{sum}[0, \beta_i-\beta_j]+\frac{a}{2}\text{sum}(-\beta_k, \beta_k], \\
    \mathbf{V}^{a}(1, d)&:=\frac{3}{2}\text{sum}[0, \beta_i-\beta_j]+\frac{a}{2}\text{sum}[-\beta_k, \beta_k]+r \ \text{sum}[-\beta_k, 0],
\end{align*} where the Minkowski sums are over $1\leq i, j, k\leq d$. 

Define the dg-subcategories 
\begin{align*}
    \mathbb{M}(d) \subset D^b(\X(d)), \ 
    (\mathrm{resp.}\ 
    \mathbb{M}(d)_w \subset D^b(\X(d))_w)
\end{align*}
to be generated by $\OO_{\X(d)}\otimes \Gamma_{GL(d)}(\chi)$, where $\chi$ is a dominant weight of $T(d)$ such that
\begin{equation}\label{M}
    \chi+\rho\in \mathbf{W}(d), \ 
    (\mathrm{resp.}\ \chi+\rho \in \mathbf{W}(d)_w). 
    \end{equation}

Let $\mu\in \mathbb{R}$ and let $\delta:=\mu\sigma_d\in M(d)_\mathbb{R}$. Define the dg-subcategories 
\begin{align*}
	\mathbb{D}(d; \delta) \subset D^b(\X(d)), \ (\mathrm{resp.}\ \mathbb{D}(d; \delta)_w \subset D^b(\X(d))_w)
\end{align*} 
generated by $\OO_{\X(d)}\otimes \Gamma_{GL(d)}(\chi)$, where $\chi$ is a dominant weight of $T(d)$ such that 
\begin{equation}\label{MM}\chi+\rho+\delta\in \mathbf{V}(d),\ (\mathrm{resp.}\ \chi+\rho +\delta \in \mathbf{V}(d)_w).  
\end{equation} 
 
Define the dg-subcategories 
\begin{align*}
	\mathbb{M}^a(1, d; \delta)
	\subset D^b(\X^a(1, d)), \ 
	\mathrm{(resp.}\ \mathbb{F}^{a,r}(1, d; \delta)
	\subset D^b(\X^{a,r}(1, d)) )
\end{align*} generated by $\OO_{\X^a(1,d)} \otimes \Gamma_{GL(d)}(\chi)$ (resp.~
$\OO_{\X^{a,r}(1, d)}\otimes \Gamma_{GL(d)}(\chi)$), 
where $\chi$ is a dominant weight of $T(d)$ such that
\begin{equation}
	\chi+\rho+\delta\in \mathbf{W}^a(1, d).
\end{equation}
Define the dg-subcategories
\begin{align}\label{subcat:D}
	\mathbb{D}^a(1, d; \delta)
	\subset D^b(\X^a(1, d)), \ 
	\mathrm{(resp.}\ \mathbb{E}^{a,r}(1, d; \delta)
	\subset D^b(\X^{a,r}(1, d)) )
\end{align}
generated by $\OO_{\X^a(1, d)}\otimes \Gamma_{GL(d)}(\chi)$
(resp.~
$\OO_{\X^{a,r}(1, d)} \otimes \Gamma_{GL(d)}(\chi)$),
where $\chi$ is a dominant weight of $T(d)$ such that
\begin{equation}\label{D}
	\chi+\rho+\delta\in \mathbf{V}^{a}(1, d).
\end{equation}

Let $\Tr W$ be the function 
\begin{equation*}
	\Tr W:=\Tr C[A, B]: \X(d)=\mathfrak{gl}(d)^{\oplus 3}/GL(d)\to \mathbb{C}.
\end{equation*}
Define the subcategory
\begin{align*}
	\mathbb{S}(d):=\text{MF}(\mathbb{M}(d), \Tr W)
\subset \mathrm{MF}(\mathcal{X}(d), \Tr W)
\end{align*} 
to be the subcategory of matrix factorizations
$\left(F\rightleftarrows G\right)$ with $F$ and $G$ in $\mathbb{M}(d)$.

Let $\chi_0 \colon GL(V) \to \mathbb{C}^{\ast}$ be the determinant character sending $g$ to $\det g$. We define: 
\begin{align*}
	I^{a,r}(d)&:=R^{a,r}(1, d)^{\chi_0\text{-ss}}/GL(d),\\
	P^{a,r}(d)&:=R^{a,r}(1, d)^{\chi^{-1}_0\text{-ss}}/GL(d).
\end{align*}

\begin{proposition}\label{propchi}
	Let $\delta=\mu\sigma_d$, and $\chi+\rho+\delta\in \mathbf{V}^{a}(1, d)$ be a dominant weight. Then there is a unique $d'\leq d$, partition $(d_i)_{i=1}^k$ of $e:=d-d'$, integers $(w_i)_{i=1}^k$ satisfying \eqref{bounds}, and weights $\chi_i\in M(d_i)$, $\chi'\in M(d')$
	such that $\chi=\chi_1+\cdots+\chi_k+\chi'$
	and 
	\begin{align*}
		\chi_i+\rho_i&\in \mathbf{W}(d_i)_{w_i}, \ 
		\chi'+\rho'+(\mu-e)\sigma_{d'}\in \mathbf{W}^a(1, d'),
	\end{align*}
	where $\rho_i$ and $\rho'$ are
	half the sums of positive roots of $GL(d_i)$ and
	$GL(d')$.
\end{proposition}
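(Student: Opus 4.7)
The plan is to adapt the rank-one argument of \cite{PTc} to our higher-rank polytope $\mathbf{V}^{a}(1,d)$, whose only structural difference is the factor $r$ in front of the summand $\text{sum}[-\beta_k,0]$. The strategy is to read off the partition $(d_i)$ together with the data $(w_i,\chi_i,\chi')$ from a canonical Minkowski decomposition of $\chi+\rho+\delta$, and then to establish uniqueness via a concavity argument.

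First, I would fix a Minkowski decomposition exhibiting $\chi+\rho+\delta$ as a point of $\mathbf{V}^a(1,d)$:
\begin{align*}
\chi+\rho+\delta=\tfrac{3}{2}\sum_{i,j}t_{ij}(\beta_i-\beta_j)+\tfrac{a}{2}\sum_k s_k\beta_k+r\sum_k u_k\beta_k,
\end{align*}
with $t_{ij}\in[0,1]$, $s_k\in[-1,1]$, $u_k\in[-1,0]$. Using that $\chi$ is dominant together with the antidominant ordering of $\beta_1,\ldots,\beta_d$, one may arrange the slopes $u_k$ to be non-decreasing in $k$; their jumps then cut the index set $\{1,\ldots,d\}$ into consecutive blocks $B_1\sqcup\cdots\sqcup B_k\sqcup B'$, with $B'$ collecting the indices on which $u_k=0$. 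Setting $d_i:=|B_i|$, $d':=|B'|$, $e:=d-d'$, $\chi_i:=\chi|_{B_i}$, $\chi':=\chi|_{B'}$, I define $w_i$ to be the unique integer fixing the $\tau_{d_i}$-coordinate of $\chi_i+\rho_i$ inside $\mathbf{W}(d_i)_{w_i}$.

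The two containment claims then reduce to the bookkeeping identities $\rho=\sum_i\rho_i+\rho'+\rho_{\mathrm{cross}}$ and $\sigma_d=\sum_i\sigma_{d_i}+\sigma_{d'}$, together with the substitution $v_i=w_i+d_i(d'+\sum_{j>i}d_j-\sum_{j<i}d_j)$. Concretely, the cross-root contribution $\rho_{\mathrm{cross}}$, combined with the portion of $\mu\sigma_d$ restricted to the blocks $B_i$ and the constant $u$-slope on each block, recombines to give precisely the central weights $w_i\tau_{d_i}$ on the $B_i$ factor. The remaining data restricted to $B'$, after the shift by $(\mu-e)\sigma_{d'}$, lands in $\mathbf{W}^a(1,d')$ because the shift exactly cancels the portion of $\delta$ already absorbed by the other blocks.

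The main obstacle will be uniqueness together with the sharp bounds \eqref{bounds}. I expect this to come from a concave-envelope argument parallel to \cite{PTc}: the slopes $v_i/d_i$ should realize the slopes of the extremal edges of the upper concave majorant of the partial sums of the $\beta$-coordinates of $\chi+\rho+\delta$, and this majorant is canonically determined. Strict increase $v_1/d_1<\cdots<v_k/d_k$ corresponds to the edges being extremal; the bound $v_k/d_k\leq 0$ reflects $u_k\leq 0$ on each block; and $-1<v_1/d_1$ reflects $u_k>-1$ after the rescaling by $r$ built into the defining factor $r\,\text{sum}[-\beta_k,0]$ of $\mathbf{V}^{a}(1,d)$. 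Once uniqueness of the concave envelope is settled, existence of the decomposition is automatic from the construction above.
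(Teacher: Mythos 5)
Your general strategy — exhibit $\chi+\rho+\delta$ via a Minkowski decomposition, group the coordinates into consecutive slope-blocks, and extract uniqueness from a canonical concave (Harder--Narasimhan) envelope — is exactly the Pădurariu--Toda technique that the paper's one-line proof simply cites from \cite{PTc}, so you have identified the right skeleton of the argument.

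However, the bounds you write do not match \eqref{bounds}, and this is not just a normalization slip. You assert $-1<v_1/d_1<\cdots<v_k/d_k\leq 0$, tying the strict left endpoint to ``$u_k>-1$,'' but your own Minkowski decomposition allows $u_k\in[-1,0]$, a closed interval, so nothing forces $u_k>-1$. The proposition's actual constraint is
\begin{equation*}
-r-\mu-\tfrac{a}{2}\;\leq\;\frac{v_1}{d_1}\;<\;\cdots\;<\;\frac{v_k}{d_k}\;\leq\;-\mu-\tfrac{a}{2},
\end{equation*}
with \emph{weak} inequalities at both endpoints, and the affine shift by $\mu$ (from $\delta=\mu\sigma_d$) and by $a/2$ (from the $\tfrac{a}{2}\,\mathrm{sum}[-\beta_k,\beta_k]$ summand in $\mathbf{V}^a(1,d)$) is essential; the interval has length $r$, coming from the factor $r$ in $r\,\mathrm{sum}[-\beta_k,0]$, but is \emph{not} $(-1,0]$. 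Using your bounds, the concave-envelope characterization would misidentify boundary partitions, so the uniqueness claim would fail precisely on those $\chi$ for which some block slope equals one of the endpoints. (The strict/open bounds you have in mind appear later, in Theorem~\ref{thm3} and Corollary~\ref{thm2cor}, where the genericity hypothesis $2\mu l\notin\mathbb{Z}$ rules the endpoints out — that hypothesis is absent in Proposition~\ref{propchi}.) You should also say more carefully how the block $B'$ is determined: it is not ``the indices where $u_k=0$'' per se, but the final slope-block, which must be matched against the polytope $\mathbf{W}^a(1,d')$ with the recentering $\delta'=(\mu-e)\sigma_{d'}$ exactly compensating the cross-terms in $\rho$ and the portion of $\delta$ absorbed by the $B_i$; this bookkeeping identity is where the formula \eqref{transvw} for $v_i$ actually comes from and should be spelled out rather than asserted.
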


\begin{proof}
	The statement of the proposition is identical to \cite[Proposition 3.5]{PTc}. In order to prove it, we need the rank $r$ generalization 
	of \cite[Section 2.8.2, Proposition 2.2, Proposition 3.3, Proposition 3.4]{PTc}. We leave the detailed verification to the reader. 	
\end{proof}

		\begin{proposition}\label{generationprop}
			Let $\delta':=\left(\mu-d+d'\right)\sigma_{d'}$. The category $\mathbb{D}^a(1, d; \delta)$ is generated by the images of \begin{equation}\label{Hallproduct3}
				\left(\boxtimes_{i=1}^k \mathbb{M}(d_i)_{w_i}\right)\boxtimes \mathbb{M}^a\left(1, d'; \delta'\right)\to \mathbb{D}^a(1, d; \delta)
			\end{equation}
			for all $d'\leq d$, all partitions $(d_i)_{i=1}^k$ of $d-d'$, and $(w_i)_{i=1}^k$ satisfying \eqref{transvw} and \eqref{bounds}.
		\end{proposition}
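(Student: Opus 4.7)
The plan is to deduce the statement directly from Proposition \ref{propchi} via an induction on $d$, reducing everything to a weight-by-weight verification on the canonical generators of the three dg-subcategories involved. The target category $\mathbb{D}^a(1,d;\delta)$ is by definition generated by the objects $\OO_{\X^a(1,d)} \otimes \Gamma_{GL(d)}(\chi)$ as $\chi$ ranges over dominant weights of $T(d)$ satisfying $\chi+\rho+\delta \in \mathbf{V}^a(1,d)$, so it suffices to show that each such generator lies in the triangulated subcategory generated by the images of the Hall products in \eqref{Hallproduct3}.

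First, I would fix such a dominant weight $\chi$ and apply Proposition \ref{propchi} to obtain the unique decomposition $\chi = \chi_1+\cdots+\chi_k+\chi'$ with $\chi_i+\rho_i \in \mathbf{W}(d_i)_{w_i}$ and $\chi'+\rho'+(\mu-e)\sigma_{d'} \in \mathbf{W}^a(1,d')$, where $e=d-d'$ and the cocharacter $\lambda'=(\lambda,1_{d'})$ associated with the partition $(d_i)_{i=1}^k$ of $e$ matches the one used in \eqref{attract:a}. Because $\delta'=(\mu-d+d')\sigma_{d'}=(\mu-e)\sigma_{d'}$, the corresponding factors $\OO_{\X(d_i)} \otimes \Gamma_{GL(d_i)}(\chi_i)$ lie in $\mathbb{M}(d_i)_{w_i}$ and $\OO_{\X^a(1,d')} \otimes \Gamma_{GL(d')}(\chi')$ lies in $\mathbb{M}^a(1,d';\delta')$.

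Next, I would compute the Hall product $p_{\lambda'\ast}q_{\lambda'}^{\ast}$ on the external tensor product of these generators. By the projection formula and the identification $\X^a(1,d)^{\lambda'\geq 0}/\X^a(1,d)^{\lambda'} = \X^a(1,d)/\X^a(1,d)^{\lambda'\geq 0}$-type computation on weight spaces, one obtains $\OO_{\X^a(1,d)} \otimes \Gamma_{GL(d)}(\chi)$ as the highest-weight piece of the resulting $GL(d)$-representation, together with a sum of terms $\OO_{\X^a(1,d)} \otimes \Gamma_{GL(d)}(\chi'')$ where each $\chi''$ is strictly smaller than $\chi$ in the dominance order and still satisfies $\chi''+\rho+\delta \in \mathbf{V}^a(1,d)$. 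This is the standard calculation underlying the Hall product of Schur bundles on quotient stacks of linear representations.

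Finally, I would run an induction on $d$ (and, within a fixed $d$, on dominance order of $\chi$), using the base case $d=0$ where the statement is trivial. The lower-order terms $\OO \otimes \Gamma(\chi'')$ lie in $\mathbb{D}^a(1,d;\delta)$ by definition and, by the inductive hypothesis, are already in the subcategory generated by images of \eqref{Hallproduct3}, so subtracting them from the Hall product expresses the original generator $\OO_{\X^a(1,d)} \otimes \Gamma_{GL(d)}(\chi)$ as an element of that subcategory. The main obstacle is the explicit weight bookkeeping in the Hall product step, in particular checking that the $\rho$-shift from $\rho_i+\rho'$ to $\rho$ combined with the $\lambda'$-weights of the tangent complex of $p_{\lambda'}$ produces only weights that still lie in $\mathbf{V}^a(1,d)$, so that every correction term indeed belongs to $\mathbb{D}^a(1,d;\delta)$; this is precisely the higher-rank analogue of the polytope inclusion verified in \cite[Proposition 3.4]{PTc}, which I would invoke (as promised at the end of Proposition \ref{propchi}) to close the induction.
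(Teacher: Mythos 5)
Your proposal reconstructs, up to minor imprecisions, the generation argument of P\u{a}durariu--Toda in \cite[Propositions 3.9 and 3.10]{PTc}, which the paper cites verbatim as its proof: unique decomposition of the dominant weight $\chi$ via Proposition~\ref{propchi}, computation of the Hall product to recover $\OO_{\X^a(1,d)}\otimes\Gamma_{GL(d)}(\chi)$ as the leading term modulo lower-order corrections staying inside $\mathbb{D}^a(1,d;\delta)$, and an inductive argument on a suitable order of weights. This is essentially the same approach; the only caveats (neither fatal) are that the auxiliary outer induction on $d$ is superfluous since all correction terms live at the same dimension vector, and the well-founded order used to terminate the induction is finer than the dominance order on weights --- it is controlled by the combinatorics of the decomposition $(d', (d_i), (w_i))$ rather than by $\chi$ alone, exactly as in the cited propositions.
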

		
\begin{proof} 
	The proof of this proposition is identical to that of \cite[Proposition 3.9, Proposition 3.10]{PTc}.
\end{proof}

\subsection{Main theorems.}
			
\begin{theorem}\label{thm2}
	Let $\delta:=\mu\sigma_d\in M(d)_{\mathbb{R}}$. 
	Then we have a semiorthogonal decomposition
	\begin{equation}\label{SODDM}
		\mathbb{D}^a(1, d; \delta)=\Big\langle \left(\boxtimes_{i=1}^k \mathbb{M}(d_i)_{w_i}\right)\boxtimes \mathbb{M}^a\left(1, d'; \delta'\right) \Big\rangle.
	\end{equation}
	The RHS sums over $d'\leq d$, partitions $\sum_{i=1}^{k} d_i=d-d'$, and integers $w_i\in \mathbb{Z}$ such that for
	\begin{equation}\label{transvw}
		v_i:=w_i+d_i\left(d'+\sum_{j>i} d_j-\sum_{j<i}d_j\right),
	\end{equation} we have
	\begin{equation}\label{bounds}
		-r-\mu-\frac{a}{2}\leq \frac{v_1}{d_1}<\cdots<\frac{v_k}{d_k}\leq -\mu-\frac{a}{2},
	\end{equation}
	and where $\delta':=\left(\mu-d+d'\right)\sigma_{d'}$.
	
	Let $b: \X^{a,r}(1, d)\to \X^a(1, d)$ be the projection. Applying $b^*$ to \eqref{SODDM}, we obtain
	\begin{equation}\label{SODEF}
		\mathbb{E}^{a,r}(1, d; \delta)=\Big\langle \left(\boxtimes_{i=1}^k \mathbb{M}(d_i)_{w_i}\right)\boxtimes \mathbb{F}^{a,r}(1, d'; \delta') \Big\rangle.
	\end{equation}
\end{theorem}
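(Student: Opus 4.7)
The plan is to follow the strategy of Pădurariu–Toda verbatim, with the rank $r$ modification absorbed into the lower bound of \eqref{bounds}. Generation of $\mathbb{D}^a(1,d;\delta)$ by the images of the Hall product \eqref{Hallproduct3} is already supplied by Proposition \ref{generationprop}. Thus what remains is to establish the semiorthogonality statement, i.e.\ Hom-vanishing between the Hall product pieces indexed by distinct tuples $(d',(d_i),(w_i))$ satisfying \eqref{transvw}–\eqref{bounds}, together with full faithfulness of each piece. Once this is done, applying the smooth (vector bundle) pullback $b^*\colon D^b(\X^a(1,d))\to D^b(\X^{a,r}(1,d))$ transports the decomposition \eqref{SODDM} to \eqref{SODEF}: the map $b$ is the projection that forgets the extra $V^{\oplus r}$ factor coming from the edges $e_1,\ldots,e_r$ of $Q^{a,r}$, and the polytope $\mathbf{V}^a(1,d)$ has been defined precisely so that the $r\,\mathrm{sum}[-\beta_k,0]$ contribution accounts for those extra $T(d)$-weights.

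First I would set up the weight bookkeeping on the attracting stack $\X^a(1,d)^{\lambda\geq 0}$. For the antidominant cocharacter $\lambda'=(\lambda,1_{d'})$ from \eqref{attract:a}, the fixed locus is $\prod_i\X(d_i)\times\X^a(1,d')$; the fiber direction $q_{\lambda'}$ contributes $T(d)$-weights $\beta_i-\beta_j$ with $\langle\lambda',\cdot\rangle>0$ plus the $a$ framing weights $\pm\beta_k$ with appropriate sign. I would then compute the $T(d)$-weights of $p_{\lambda'\ast}q_{\lambda'}^{\ast}$ applied to a generator $\boxtimes_i(\OO\otimes\Gamma_{GL(d_i)}(\chi_i))\boxtimes(\OO\otimes\Gamma_{GL(d')}(\chi'))$, following the rank one argument of \cite[Section 3]{PTc}. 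The key inequalities for semiorthogonality reduce to the strict inequalities $v_1/d_1<\cdots<v_k/d_k$ in \eqref{bounds}, where the shift $w_i\mapsto v_i$ in \eqref{transvw} is exactly the correction $\chi_i+\rho_i\in \mathbf{W}(d_i)_{w_i}$ picks up after being pushed forward from the partial flag bundle associated to $\lambda'$.

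The second step is the cleanness of the Hall functor on each block. Here I would invoke the rank-$r$ analogue of Proposition \ref{propchi}, which uniquely expresses any dominant weight $\chi$ of $T(d)$ with $\chi+\rho+\delta\in\mathbf{V}^a(1,d)$ as a sum $\chi_1+\cdots+\chi_k+\chi'$ with $\chi_i+\rho_i\in\mathbf{W}(d_i)_{w_i}$ and $\chi'+\rho'+(\mu-e)\sigma_{d'}\in\mathbf{W}^a(1,d')$. Uniqueness of the decomposition, together with the fact that the Hall product of generators produces precisely the generator with total weight $\chi$, shows that the summand is fully faithful and the various summands are mutually semiorthogonal in the direction prescribed by the ordering of the slopes $v_i/d_i$.

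The main obstacle will be verifying the correct endpoints $-r-\mu-a/2$ and $-\mu-a/2$ of \eqref{bounds} in the rank-$r$ setting. The left endpoint shifts by $-r$ compared to the rank-one case of \cite[Theorem 3.11]{PTc} because the $r$ extra arrows from $0$ to $1$ in $Q^{a,r}$ enlarge $\mathbf{V}^{a}(1,d)$ by the Minkowski summand $r\,\mathrm{sum}[-\beta_k,0]$, which only changes the negative side of the window; the right endpoint is unchanged since the $a$ framing edges in both directions contribute symmetrically via $\tfrac{a}{2}\mathrm{sum}[-\beta_k,\beta_k]$. Once this endpoint computation is done, the rest of the argument is a line-by-line transcription of \cite[Proof of Theorem 3.11]{PTc}, and the application of $b^*$ produces \eqref{SODEF} without further work, since $b$ is an affine bundle and $b^*\mathbb{D}^a(1,d;\delta)=\mathbb{E}^{a,r}(1,d;\delta)$, $b^*\mathbb{M}^a(1,d';\delta')=\mathbb{F}^{a,r}(1,d';\delta')$ by construction.
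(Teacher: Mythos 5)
Your proposal is correct and follows essentially the same route as the paper: the paper's proof is a one-line citation of Proposition \ref{generationprop} (which in turn defers to \cite[Propositions 3.9, 3.10]{PTc}), and your outline simply unfolds what that citation entails — generation from Proposition \ref{generationprop}, uniqueness/full faithfulness from Proposition \ref{propchi}, semiorthogonality from the weight computation on the attracting stack as in \cite[Theorem 3.11]{PTc}, and the passage to $\mathbb{E}^{a,r}$ via the affine projection $b$. Your explicit identification of the $-r$ shift in the left endpoint of \eqref{bounds} as coming from the extra Minkowski summand $r\,\mathrm{sum}[-\beta_k,0]$ in $\mathbf{V}^a(1,d)$ is exactly the point where the rank-$r$ generalization enters, and it matches the paper's definitions.
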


\begin{proof}
The statement follows from Propositions \ref{generationprop}.
\end{proof}    
		
\begin{proposition}\label{propEFIP}
			Let
			$\mu \in \mathbb{R}$
			such that $2\mu l \notin \mathbb{Z}$
			for $1\leq l \leq d$
			and let $\delta:=\mu \sigma_d$.
Then we have the following equivalences
			\begin{align*}
				\mathbb{E}^{a,r}(1, d; \delta)&\hookrightarrow D^b(\X^{a,r}(1, d))\to D^b(I^{a,r}(d)),\\
				\mathbb{F}^{a,r}(1, d; \delta)&\hookrightarrow D^b(\X^{a,r}(1, d))\to D^b(P^{a,r}(d)).
			\end{align*}
\end{proposition}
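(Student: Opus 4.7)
The plan is to prove both equivalences via a window theorem for GIT quotient stacks---in the spirit of Halpern-Leistner's magic windows or the \v{S}penko--Van den Bergh noncommutative resolutions---directly generalizing the rank-one result established in \cite{PTc}. In each case the composition $\mathbb{E}^{a,r}(1, d; \delta)\hookrightarrow D^b(\X^{a,r}(1, d))\to D^b(I^{a,r}(d))$, and similarly for $\mathbb{F}^{a,r}$ and $P^{a,r}(d)$, is exact by construction; I would identify the source subcategory with the ``window'' that trivializes the restriction functor, which would give fully faithfulness and essential surjectivity at once.

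The first step is to describe the Kempf-Ness stratification of $R^{a,r}(1, d)$ under $\chi_0$- and $\chi_0^{-1}$-stability and to compute, at each stratum indexed by a destabilizing cocharacter $\lambda$, the $\lambda$-weights of the normal bundle together with the $\lambda$-weight of the stability character. A window theorem then asserts that the subcategory of $D^b(\X^{a,r}(1, d))$ generated by $\OO\otimes\Gamma_{GL(d)}(\chi)$ for dominant $\chi$ whose $\lambda$-weights lie in a prescribed interval around $-\rho-\delta$ maps equivalently onto $D^b$ of the semistable locus. The genericity condition $2\mu l\notin \IZ$ for $1\leq l\leq d$ ensures that no weight sits on the boundary of any of these intervals, so the window subcategory is unambiguously defined. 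It then remains to match this window with the polytope description: the summands of $\mathbf{V}^a(1, d)$ account for the weight contributions of the $\mathfrak{gl}(d)^{\oplus 3}$ adjoint factors, of the $V^{\oplus a}\oplus (V^\vee)^{\oplus a}$ arrows from the $Q^a$-framing, and of the additional $V^{\oplus r}$ coming from the arrows $e_1,\ldots,e_r$, while $\mathbf{W}^a(1, d)$ retains only the first two contributions.

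The main obstacle is the asymmetric bookkeeping caused by these $r$ new arrows, which break the $\chi_0\leftrightarrow \chi_0^{-1}$ symmetry that held in the rank-one case of \cite{PTc}. The arrows $e_i$ contribute $T(d)$-weights $\beta_k$ with the same sign as $\chi_0$, so they enlarge the $\chi_0$-window---this is precisely the extra $r\cdot\mathrm{sum}[-\beta_k, 0]$ summand in $\mathbf{V}^a(1, d)$---while on the $\chi_0^{-1}$-side they only push the destabilizing strata further into the unstable locus, leaving the corresponding window unchanged (hence no $r$-term in $\mathbf{W}^a(1, d)$). Verifying this asymmetry carefully for every Kempf-Ness stratum is the technical heart of the proof; once done, the equivalences follow from the window theorem as in the rank-one case, and via Proposition \ref{generationprop} the restrictions of $\mathbb{E}^{a,r}$ and $\mathbb{F}^{a,r}$ exhaust $D^b(I^{a,r}(d))$ and $D^b(P^{a,r}(d))$ respectively.
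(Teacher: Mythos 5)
Your overall strategy --- identify $\mathbb{E}^{a,r}(1,d;\delta)$ and $\mathbb{F}^{a,r}(1,d;\delta)$ with window subcategories of $D^b(\X^{a,r}(1,d))$ and then invoke the window theorem to obtain the equivalences onto $D^b(I^{a,r}(d))$ and $D^b(P^{a,r}(d))$ --- is indeed the approach of the paper and of \cite[Proposition~3.13]{PTa}, \cite[Proposition~3.13]{PTc}. But two steps in your write-up as given would not go through.

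First, the magic window theorem of \cite{HLS20} that you are implicitly relying on for essential surjectivity requires a \emph{symmetric} (quasi-symmetric) $GL(d)$-representation. The representation
$R^{a,r}(1,d)=V^{\oplus(a+r)}\oplus(V^{\vee})^{\oplus a}\oplus\mathfrak{gl}(V)^{\oplus 3}$
is \emph{not} symmetric precisely because of the $r$ extra arrows whose role you analyze. A bare Kempf--Ness computation on $R^{a,r}(1,d)$ combined with \cite{HL15} yields the fully faithful embedding, but not essential surjectivity. The device the paper uses (and which \cite{PTc} uses in the $r=1$ case you invoke) is the chain of projections
$\X^{a+r}(1,d)\xrightarrow{\widetilde{b}}\X^{a,r}(1,d)\xrightarrow{b}\X^a(1,d)$,
where $R^{a+r}(1,d)$ \emph{is} symmetric, so that the magic window theorem applies there; one then transports the equivalence along $\widetilde b^\ast$ and $b^\ast$. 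Your sketch never introduces the auxiliary stack $\X^{a+r}(1,d)$, and without it the essential surjectivity claim is unsupported.

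Second, your closing appeal to Proposition~\ref{generationprop} is misplaced. That proposition concerns generation of the subcategory $\mathbb{D}^a(1,d;\delta)\subset D^b(\X^a(1,d))$ by categorical Hall products of quasi-BPS pieces; it says nothing about the restriction functors $D^b(\X^{a,r}(1,d))\to D^b(I^{a,r}(d))$ or $\to D^b(P^{a,r}(d))$ being essentially surjective out of the window subcategories, and so cannot substitute for the projection argument above. If you repair the essential surjectivity step by passing through $\X^{a+r}(1,d)$, the rest of your outline --- the stratum-by-stratum weight bookkeeping, the role of the genericity condition $2\mu l\notin\IZ$, and the asymmetric bookkeeping between the $\chi_0$- and $\chi_0^{-1}$-windows reflected in $\mathbf{V}^a(1,d)$ versus $\mathbf{W}^a(1,d)$ --- is on the right track.
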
    
		
		\begin{proof}
			The main ingredients of the proof are window subcategories \cite{HL15} and the magic window theorem for symmetric representations \cite{HLS20}.
			In fact the same proofs as in \cite[Proposition 3.13]{PTa}
			\cite[Proposition 3.13]{PTc} apply. 
			Note that we need to consider the slightly modified projections 
\begin{align*}
	\mathcal{X}^{a+r}(1, d) \stackrel{\widetilde{b}}{\to} \mathcal{X}^{a,r}(1, d)
	\stackrel{b}{\to} \mathcal{X}^a(1, d),
\end{align*} when proving essential surjectivity.
	\end{proof}    
	
	\begin{theorem}\label{thm3}
		Let $\mu\in\mathbb{R}$
		such that $2\mu l \notin \mathbb{Z}$
		for $1\leq l \leq d$.
		Then we have
		\begin{equation}\label{SOD11}
			D^b(I^{a,r}(d))=\Big\langle  \left( \boxtimes_{i=1}^k \mathbb{M}(d_i)_{w_i}\right)\boxtimes D^b(P^{a,r}(d'))\Big\rangle.
		\end{equation}
		The RHS runs over all $d'\leq d$, partitions $(d_i)_{i=1}^k$ of $d-d'$, and $(w_i)_{i=1}^k$ such that for $v_i:=w_i+d_i\left(d'+\sum_{j>i} d_j-\sum_{j<i}d_j\right)$, we have
		\begin{equation}\label{boundsalpha}
			-r-\mu-\frac{a}{2}< \frac{v_1}{d_1}<\cdots<\frac{v_k}{d_k}<-\mu-\frac{a}{2}.
		\end{equation}
	\end{theorem}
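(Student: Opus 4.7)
The plan is to derive Theorem \ref{thm3} formally from the two main results already established in this section: the semiorthogonal decomposition \eqref{SODEF} of Theorem \ref{thm2} and the pair of equivalences of Proposition \ref{propEFIP}. The overall strategy is to take the SOD at the level of window subcategories inside $D^b(\X^{a,r}(1,d))$, transport both sides to the GIT semistable loci via Proposition \ref{propEFIP}, and then verify that under the genericity hypothesis $2\mu l\notin \mathbb{Z}$ the closed boundary inequalities in \eqref{bounds} sharpen to the strict ones in \eqref{boundsalpha}.

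First, I would set $\delta:=\mu\sigma_d$ and apply \eqref{SODEF} to obtain
\begin{equation*}
\mathbb{E}^{a,r}(1,d;\delta)=\Big\langle \big(\boxtimes_{i=1}^{k}\mathbb{M}(d_i)_{w_i}\big)\boxtimes \mathbb{F}^{a,r}(1,d';\delta')\Big\rangle,
\end{equation*}
where $\delta'=(\mu-d+d')\sigma_{d'}$ and the sum runs over the index set of \eqref{bounds}. Next I would invoke Proposition \ref{propEFIP} on the left side to identify $\mathbb{E}^{a,r}(1,d;\delta)\simeq D^b(I^{a,r}(d))$. For the right-hand factors I would check that the genericity hypothesis descends correctly: since $d-d'\in\mathbb{Z}$, the condition $2\mu l\notin\mathbb{Z}$ for $1\leq l\leq d$ implies $2(\mu-d+d')l\notin\mathbb{Z}$ for $1\leq l\leq d'$, so Proposition \ref{propEFIP} applies to identify $\mathbb{F}^{a,r}(1,d';\delta')\simeq D^b(P^{a,r}(d'))$. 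The $\mathbb{M}(d_i)_{w_i}$ factors are untouched, since they live on $D^b(\X(d_i))$ and are the same quasi-BPS categories appearing in both statements.

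The third step, which I expect to be the only place where care is needed, is passing from the closed inequalities \eqref{bounds} to the open inequalities \eqref{boundsalpha}. Suppose $v_k/d_k=-\mu-a/2$ for one of the index tuples appearing on the right of \eqref{SODEF}. Then $2v_k=-d_k(2\mu+a)$ forces $2\mu d_k\in\mathbb{Z}$, contradicting $2\mu l\notin\mathbb{Z}$ for $1\leq l\leq d$ since $1\leq d_k\leq d$. An identical argument rules out $v_1/d_1=-r-\mu-a/2$, and together with the already-strict interior inequalities this replaces \eqref{bounds} by \eqref{boundsalpha}. In other words, the genericity assumption ensures no boundary stratum of the $v$-polytope is attained, so the indexing set in the SOD is precisely the one stated.

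The hard part of the program has really been absorbed into Theorem \ref{thm2} (which relies on Proposition \ref{generationprop} and the rank-$r$ generalization of the combinatorial input from \cite{PTc}) and into Proposition \ref{propEFIP} (which rests on the magic window theorem \cite{HLS20} together with the factorization $\mathcal{X}^{a+r}(1,d)\xrightarrow{\widetilde{b}}\mathcal{X}^{a,r}(1,d)\xrightarrow{b}\mathcal{X}^a(1,d)$). Once these are in place, Theorem \ref{thm3} is a short, essentially formal consequence; the only genuine content left is the boundary-exclusion argument from the previous paragraph.
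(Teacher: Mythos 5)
Your argument is correct and follows the same route as the paper, whose proof of Theorem~\ref{thm3} is exactly the one-line statement that it follows from Theorem~\ref{thm2} and Proposition~\ref{propEFIP}. You have simply filled in the implicit bookkeeping---transporting \eqref{SODEF} across the equivalences of Proposition~\ref{propEFIP}, checking that the genericity hypothesis $2\mu l\notin\mathbb{Z}$ descends to $\delta'=(\mu-d+d')\sigma_{d'}$, and observing that this same hypothesis excludes the boundary of \eqref{bounds} so that it sharpens to \eqref{boundsalpha}.
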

\begin{proof}
	The theorem follows from Theorem \ref{thm2} and Proposition \ref{propEFIP}.
\end{proof}
	
	Let $Q^{a,r,N}$ be the quiver obtain from $Q^{a,r}$ by adding $N$ loops at the vertex $0$. The representation space of $Q^{a,r,N}$ is $R^{a,r,N}(1,d)=\mathbb{C}^N \times R^{a,r}(1, d).$
	Suppose $\widetilde{W}$ is a superpotential on $Q^{a,r,N}$ such that $\widetilde{W}|_{Q}=C[A, B]$.
	We define 
	\begin{align*}
		\mathcal{DT}^{a,r,N}_{\widetilde{W}}(d) &:= \mathrm{MF}\left(R^{a,r,N}(1, d)^{\chi_0 \text{-ss}}/GL(d), \Tr \widetilde{W}  \right), \\
		\mathcal{PT}^{a,r,N}_{\widetilde{W}}(d) &:= \mathrm{MF}\left(R^{a,r,N}(1, d)^{\chi_0^{-1} \text{-ss}}/GL(d), \Tr \widetilde{W}  \right).
	\end{align*}
	Applying $D^b(\mathbb{C}^N)\boxtimes$ and 
	the superpotential $\Tr \widetilde{W}$ to the semiorthogonal decomposition in Theorem \ref{thm3}, we obtain:
	\begin{corollary}\label{thm2cor}
		Let $\mu \in \mathbb{R}$ such that 
		$2\mu l \notin \mathbb{Z}$
		for $1\leq l \leq d$. 
		Then we have a semiorthogonal decomposition 
		\begin{equation*}
			\mathcal{DT}^{a,r,N}_{\widetilde{W}}(d)=\Big\langle  \left( \boxtimes_{i=1}^k \mathbb{S}(d_i)_{w_i}\right)\boxtimes 
			\mathcal{PT}^{a,r,N}_{\widetilde{W}}(d')\Big\rangle.
		\end{equation*}
		The RHS sums over $d'\leq d$, partitions $(d_i)_{i=1}^k$ of $d-d'$, and $(w_i)_{i=1}^k$ such that for $v_i:=w_i+d_i\left(d'+\sum_{j>i} d_j-\sum_{j<i}d_j\right)$, we have
		\begin{equation*}
			-r-\mu-\frac{a}{2}< \frac{v_1}{d_1}<\cdots<\frac{v_k}{d_k}<-\mu-\frac{a}{2}.
		\end{equation*}
	\end{corollary}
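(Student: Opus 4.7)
The plan is to deduce Corollary \ref{thm2cor} from Theorem \ref{thm3} by applying two standard operations in succession: first taking an exterior product with $D^b(\IC^N)$ to promote the quiver $Q^{a,r}$ to $Q^{a,r,N}$, and then imposing the superpotential $\Tr \widetilde{W}$ via the matrix factorization functor. The key input is the hypothesis $\widetilde{W}\vert_Q = C[A,B]$, which ensures that the pieces $\mathbb{M}(d_i)_{w_i}$ of the SOD in Theorem \ref{thm3} turn into $\mathbb{S}(d_i)_{w_i}$ after taking MF, precisely because $\mathbb{S}(d_i)_{w_i} = \mathrm{MF}(\mathbb{M}(d_i)_{w_i}, \Tr C[A,B])$ by the definition of the quasi-BPS categories.

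First I would tensor the SOD of Theorem \ref{thm3} with $D^b(\IC^N)$. Since $R^{a,r,N}(1,d) = \IC^N \times R^{a,r}(1,d)$ and the $GL(d)$-action on the $\IC^N$ factor is trivial for the representation level (the only compatibility required is through the potential), the GIT (semi)stable locus factorizes as $R^{a,r,N}(1,d)^{\chi_0\text{-ss}} = \IC^N \times R^{a,r}(1,d)^{\chi_0\text{-ss}}$, and analogously for $\chi_0^{-1}$. By Künneth / base change for semiorthogonal decompositions, the SOD
\begin{equation*}
    D^b(I^{a,r}(d)) = \Big\langle \big(\boxtimes_{i=1}^k \mathbb{M}(d_i)_{w_i}\big) \boxtimes D^b(P^{a,r}(d')) \Big\rangle
\end{equation*}
yields, after applying $D^b(\IC^N) \boxtimes (-)$, a semiorthogonal decomposition of $D^b(\IC^N \times I^{a,r}(d))$ with pieces of the form $\big(\boxtimes_{i=1}^k \mathbb{M}(d_i)_{w_i}\big) \boxtimes D^b(\IC^N \times P^{a,r}(d'))$; here I am using that the $\mathbb{M}(d_i)$ factors do not acquire a $\IC^N$-direction because they are pulled back from the $R(d_i)$ factors via the projection.

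Next I would impose the superpotential by applying $\mathrm{MF}(-, \Tr \widetilde{W})$ to both sides. The Koszul / matrix factorization functor sends semiorthogonal decompositions of $D^b$ of a smooth quotient stack into semiorthogonal decompositions of the corresponding MF category, provided the potential is pulled back compatibly along each piece: on the $\boxtimes_i \mathbb{M}(d_i)_{w_i}$ factors the pullback of $\Tr \widetilde{W}$ is $\sum_i \Tr C_i[A_i,B_i]$, by the assumption $\widetilde{W}\vert_Q = C[A,B]$ applied to each block of the block-diagonal Levi $\prod_i GL(d_i) \subset GL(d-d')$, while on the remaining $D^b(\IC^N \times P^{a,r}(d'))$ factor the pullback of $\Tr \widetilde{W}$ is $\Tr \widetilde{W}$ on $R^{a,r,N}(1,d')$. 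The resulting pieces are exactly $\boxtimes_i \mathbb{S}(d_i)_{w_i}$ and $\mathcal{PT}^{a,r,N}_{\widetilde{W}}(d')$, giving the claimed SOD; the indexing ranges transfer verbatim from Theorem \ref{thm3}.

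The main technical obstacle is the compatibility of $\mathrm{MF}(-,\Tr \widetilde{W})$ with the Hall-type attracting correspondences $(p_\lambda, q_\lambda)$ that underlie the SOD in Theorem \ref{thm3}. One must verify that base change for attracting loci interacts well with the potential, so that the generation and semiorthogonality statements survive passage to matrix factorizations. This is precisely the argument carried out in \cite{PTc} in the rank one case, and thanks to the identity $\widetilde{W}\vert_Q = C[A,B]$, which makes the restriction to each $\mathbb{M}(d_i)$-factor agree with the superpotential defining $\mathbb{S}(d_i)$, the same argument applies here without modification; the additional $\IC^N$ factor is inert throughout since no $GL(d)$-weights live on it.
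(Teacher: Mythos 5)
Your proposal is correct and takes essentially the same route as the paper, whose entire proof is the one-line remark preceding the corollary: ``Applying $D^b(\mathbb{C}^N)\boxtimes$ and the superpotential $\Tr \widetilde{W}$ to the semiorthogonal decomposition in Theorem \ref{thm3}, we obtain.'' You have simply made those two steps --- the K\"unneth extension by the inert $\mathbb{C}^N$ factor and the passage to matrix factorizations using $\widetilde{W}\vert_Q = C[A,B]$ to identify the pieces $\mathrm{MF}(\mathbb{M}(d_i)_{w_i},\Tr C[A,B])=\mathbb{S}(d_i)_{w_i}$ --- explicit, including the correct observation that compatibility of MF with the attracting correspondences is the rank-one argument from \cite{PTc} repeated verbatim.
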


\begin{remark}
	Applying Corollary \ref{thm2cor} to \eqref{higher_con} and \eqref{higher_ADHM}, we obtain the higher rank local categorical 
	DT/PT wallcrossing as semiorthogonal decompositions.
\end{remark} 

\begin{remark}
	We repeat a computation in \cite[Corrolllary 4.13]{PTc}, which reproduces
	MacMahon function. First recall that there is an equivalence \cite{Isi13} \cite[Theorem 4.4]{PTa} 
	\begin{align*}
		\Phi: D^b(\mathcal{C}(d)) & \stackrel{\sim}{\to} 
		\text{MF}^{\text{gr}}(\mathcal{X}(d), \Tr W), \\
		\mathbb{T}(d)_v & \stackrel{\sim}{\to} \mathbb{S}^{\text{gr}}(d)_w, \ v=w ,
	\end{align*}
where $\mathcal{C}(d)$ is the stack of commuting matrices and $\Tr W = \Tr C[A,B]$. Promoting the equivalence to equivariant setting, 
we have an equivalence
	\begin{align}\label{TSeq}
	\mathbb{T}_T(d_1)_{v_1} \boxtimes \cdots \boxtimes \mathbb{T}_T(d_k)_{v_k} \stackrel{\sim}{\to} 
	\mathbb{S}_T^{\text{gr}}(d_1)_{w_1} \boxtimes \cdots \boxtimes 
	\mathbb{S}_T^{\text{gr}}(d_k)_{w_k} .
	\end{align}
Note that the $T$-action on $\mathcal{C}(d)$ is induced by 
a two-dimensional torus, acting on $\IC^2$ by $(t_1,t_2)\dot(x,y)=(t_1 x, t_2 y)$, while the $T$-action on $\mathcal{X}(d)$ scales the linear map $C$ with weight $2$. 

By \cite[Theorem 4.12]{PTa}, we have
\begin{align}\label{p2}
	\dim_{\mathbb{F}} K_T(\mathbb{T}(d)_{v})\otimes_{\mathbb{K}}\mathbb{F} =
	\dim_{\mathbb{F}} K(\mathbb{T}_T(d)_{v})\otimes_{\mathbb{K}}\mathbb{F}=
	p_2(\text{gcd}(d,v)),
\end{align} where $\mathbb{K}=\IZ[q_1^{\pm 1},q_2^{\pm 1}]$, $\mathbb{F}$ is the fraction field of $\mathbb{K}$, and $p_2(n)$ is the number of partitions of $n$.

Recall the generating functions:
\begin{align}\label{p2p3}
	& \prod_{n \geq 1} \frac{1}{1-q^n}= \sum_{d \geq 1} p_2(d) q^d,  \\
	& \prod_{n \geq 1} \frac{1}{(1-q^n)^n} = \sum_{d \geq 1} p_3(d) q^d = M(q). \nonumber
\end{align}

Applying Corollary \ref{thm2cor} and setting $a=0$, $N=0$, $\widetilde{W}= C[A,B]$, and $\mu=-r+\varepsilon$, we have
\begin{align}\label{DTgrT}
	\mathcal{DT}^{\text{gr}}_T(d) & := \mathcal{DT}^{a,r,N,\text{gr}}_{\widetilde{W},T}(d) \\
	& = \Big\langle \ \mathbb{S}_T^{\text{gr}}(d_1)_{w_1} \boxtimes \cdots \boxtimes 
	\mathbb{S}_T^{\text{gr}}(d_k)_{w_k} \ \Big\vert \ 0 \leq v_1/d_1<\cdots<v_k/d_k<r \ \Big\rangle. \nonumber
\end{align}

Let $a_d = \dim_{\mathbb{F}} K(\mathcal{DT}^{\text{gr}}_T(d))$. By \eqref{TSeq} \eqref{p2} \eqref{DTgrT} , we have
\begin{align*}
	\sum_{d\geq 0} a_d q^d = \sum_{0\leq v_1/d_1<\cdots<v_k/d_k<r} \Big(\prod_{i=1}^k p_2(\text{gcd}(d_i,v_i)) q^{d_i} \Big) = \prod_{\substack{0\leq\gamma<r \\ \gamma=a/b,\ \text{gcd}(a,b)=1}} \prod_{k \geq 1} \frac{1}{1-q^{bk}} = M(q)^r .
\end{align*}

\end{remark}

\bibliographystyle{alpha}

\end{document}